\newtheorem{remark}{\bf  Remark}[section]
\newtheorem{assumption}{\bf  Assumption}
\newcommand{\bbA}{{\mathbb{A}}}
\newcommand{\bbC}{{\mathbb{C}}}
\newcommand{\bbM}{{\mathbb{M}}}
\newcommand{\bbE}{{\mathbb{E}}}
\newcommand{\bbH}{{\mathbb{H}}}
\newcommand{\bbR}{{\mathbb{R}}}
\newcommand{\bbN}{{\mathbb{N}}}
\newcommand{\bbJ}{{\mathbb{J}}}
\newcommand{\bbP}{{\mathbb{P}}}
\newcommand{\cA}{\mathcal{A}}
\newcommand{\cB}{\mathcal{B}}
\newcommand{\cC}{\mathcal{C}}
\newcommand{\cD}{\mathcal{D}}
\newcommand{\cF}{\mathcal{F}}
\newcommand{\cJ}{\mathcal{J}}
\newcommand{\cL}{\mathcal{L}}
\newcommand{\cM}{\mathcal{M}}
\newcommand{\cN}{\mathcal{N}}
\newcommand{\cG}{\mathcal{G}}
\newcommand{\cQ}{\mathcal{Q}}
\newcommand{\cS}{\mathcal{S}}
\newcommand{\argmax}{\operatornamewithlimits{argmax}}
\newcommand{\argmin}{\operatornamewithlimits{argmin}}
\newcommand{\bsnull}{{\boldsymbol 0}}
\newcommand{\bszeta}{\boldsymbol{\zeta}}
\newcommand{\bsxi}{{\boldsymbol{\xi}}}
\newcommand{\bslambda}{{\boldsymbol{\lambda}}}
\newcommand{\bsrho}{{\boldsymbol{\rho}}}
\newcommand{\bstau}{\boldsymbol{\tau}}
\newcommand{\bseta}{\boldsymbol{\eta}}
\newcommand{\bspsi}{\boldsymbol{\psi}}
\newcommand{\bsnu}{{\boldsymbol{\nu}}}
\newcommand{\bsmu}{{\boldsymbol{\mu}}}
\newcommand{\bse}{{\boldsymbol{e}}}
\newcommand{\bsm}{{\boldsymbol{m}}}
\newcommand{\bsd}{\boldsymbol{d}}
\newcommand{\beq}{\begin{equation}}
\newcommand{\eeq}{\end{equation}}
\newcommand{\ba}{\begin{array}}
\newcommand{\ea}{\end{array}}
\newcommand{\pnote}[1]{\textcolor{black}{#1}}
\title{Hessian-based adaptive sparse quadrature for
  infinite-dimensional Bayesian inverse problems \thanks{This work was
    supported by DARPA's EQUiPS program under contract number
    W911NF-15-2-0121, NSF grants CBET-1508713 and ACI-1550593, and DOE
grants  DE-SC0010518 and DE-SC0009286.}}
\author{Peng Chen \thanks{Institute for Computational Engineering \&
    Sciences, The University of Texas at Austin, Austin, TX 78712
    (\email{peng@ices.utexas.edu, uvilla@ices.utexas.edu}).}  \and
  Umberto Villa \footnotemark[2] \and Omar Ghattas \thanks{Institute
    for Computational Engineering \& Sciences, Department of
    Mechanical Engineering, and Department of Geological Sciences, The
    University of Texas at Austin, Austin, TX 78712
    (\email{omar@ices.utexas.edu}).} } 
\begin{document}
\maketitle
\slugger{sisc}{xxxx}{xx}{x}{x--x}

\begin{abstract}
In this work we propose and analyze a Hessian-based adaptive sparse
quadrature to compute infinite-dimensional integrals with respect to
the posterior distribution in the context of Bayesian inverse problems
with Gaussian prior. Due to the concentration of the posterior
distribution in the domain of the prior distribution, a prior-based
parametrization and sparse quadrature may fail to capture the
posterior distribution and lead to erroneous evaluation results. By
using a parametrization based on the Hessian of the negative
log-posterior, the adaptive sparse quadrature can effectively allocate
the quadrature points according to the posterior distribution. A
dimension-independent convergence rate of the proposed method is
established under certain assumptions on the Gaussian prior and the
integrands. 
Dimension-independent and faster convergence than
$O(N^{-1/2})$ is demonstrated for a linear as well as a nonlinear
inverse problem whose posterior distribution can be effectively
approximated by a Gaussian distribution at the MAP point.

\end{abstract}

\begin{keywords}
Infinite-dimensional Bayesian inverse problems, curse of
dimensionality, Hessian-based adaptive sparse quadrature, sparse grid,
Gaussian prior, dimension-independent convergence analysis 
\end{keywords}

\begin{AMS}
65C20, 65D30, 65D32, 65N12, 65N15, 65N21
\end{AMS}

\pagestyle{myheadings}
\thispagestyle{plain}
\markboth{Hessian-based adaptive sparse quadrature for infinite-dimensional Bayesian inverse problems}{P. Chen, U. Villa and O. Ghattas}

\section{Introduction}

In many practical applications, a quantity of interest (QoI) predicted
by a mathematical model of a physical
system may be stochastic rather than deterministic  due to
uncertainties arising from inadequate
knowledge of input parameters that characterize 
material properties, computational geometries, initial and boundary
conditions, source terms, etc. Solution of an inverse problem will 
reduce these uncertainties based on available observational data on some
system outputs, leading to more
reliable computational predictions of the QoI. In a Bayesian
framework, the solution of the inverse problem entails the computation
of the posterior probability distribution of the uncertain input
parameter conditioned on (possibly) noisy observational data of the
system output. A prior probability distribution, based on experts'
belief, 
is prescribed for the uncertain
input parameter. Then, the posterior distribution of the uncertain
parameter can be formally obtained by the Radon--Nikodym derivative by
Bayes' theorem \cite{stuart2010inverse}. Once the posterior
distribution is known, one is often interested in evaluating the
statistical moments of the QoI (e.g., expectation and variance) with
respect to the posterior distribution for the assessment, design,
control and optimization of the system.

We consider the case of a spatially heterogeneous uncertain parameter,
i.e., a spatially correlated random field with a prescribed bounded
covariance between any two points of the physical domain. This setting
makes the uncertain parameter infinite-dimensional, which naturally
leads to an infinite-dimensional integration problem for the
evaluation of the statistical moments of the QoI with respect to the
posterior distribution. Several computational challenges are commonly
faced for such integration problems. Traditional deterministic
integration methods face the curse of dimensionality, i.e., the
computational complexity grows exponentially fast with respect to the
parameter dimension so that only a limited number of dimensions can be
resolved. On the other hand stochastic integration methods such as
Monte Carlo (MC) or Markov-chain Monte Carlo (MCMC) do not depend on
the parameter dimension but on the variance of the QoI; however the
convergence of the quadrature errors, $O(N^{-1/2})$ with $N$
quadrature samples \cite{hoang2013complexity}, is often very slow. In
addition, the posterior distribution, unlike the prior distribution,
is usually not explicitly available but rather is implicitly
represented by means of the Bayes Theorem and becomes rather
concentrated in the parameter space when the observational data are
very informative and thus is difficult to sample efficiently. Finally,
when the system is modeled by partial differential equations (PDE) in
complex geometries, evaluation of the QoI at each quadrature sample
require a full PDE solve that can involve expensive large-scale
computations, so that only a small number of samples can be computed.

We tackle these computational challenges by exploiting the
\emph{intrinsic sparsity} of the integration problem with respect to
the posterior distribution and the \emph{structure} of the inverse
problem. By sparsity, we mean that the QoI has markedly different
sensitivity in different parameter dimensions under the posterior
distribution; in other words, the dependence of the QoI on different
parameter dimensions is rather anisotropic with suitable
parametrization of the uncertain parameter, so that we can identify
the most sensitive dimensions and allocate most of the computational
effort in these dimensions \cite{schwab2012sparse, Schillings2013,
  schillings2014scaling, chen2015sparse, chen2016sparse,
  cui2016dimension}. By structure, we mean that high-order derivatives
of the parameter-to-observable map with respect to the parameter at
the maximum a posteriori (MAP) point, in particular
the Hessian, describe the inherent structure of the manifold of the
posterior distribution about the MAP point, which is especially useful
when the posterior distribution is relatively concentrated at the MAP
point due to informative observational data \cite{girolami2011riemann,
  martin2012stochastic, bui2013computational,
  petra2014computational}. To exploit these opportunities and to make
the solution of the infinite-dimensional Bayesian inverse problem
computationally feasible, here we develop a novel Hessian-based adaptive
sparse quadrature for integration of a QoI with respect to the
posterior distribution.

More specifically, we consider a Gaussian distribution in a Hilbert
space as the prior distribution for the uncertain parameter, where the
covariance operator is given by a fractional power of a second order
elliptic differential operator. We then compute the MAP point by
solving an optimization problem using a Lagrangian variational
approach and an inexact Newton--conjugate gradient solver. The covariance of the
Gaussian approximation of the posterior distribution at the MAP point,
to which we refer as the Gaussian posterior covariance, is given
as the inverse of the Hessian of the negative log-posterior
distribution. Like the prior covariance, this covariance is of trace
class; indeed we show that the eigenvalues of this covariance are
less than or equal to those of the prior covariance in every
dimension as long as the Hessian of the data misfit is non-negative.   
However, the Gaussian posterior covariance operator is formally a
large dense operator, and it is not available explicitly, but
implicitly through its action on a given function. To apply this
covariance operator to a function is very expensive, as it requires
solution of the so-called incremental forward and adjoint problems. To
make actions of the Gaussian posterior covariance computationally
feasible, we compute its eigendecomposition by a Lanczos
\cite{calvetti1994implicitly} or a randomized \cite{halko2011finding}
method. The fast decay of the eigenvalues of the Gaussian posterior
covariance allows us to express the infinite-dimensional uncertain
parameter by a truncated Karhunen--Lo\`eve expansion on the
eigenpairs, parametrized by a vector of independent and identically
distributed Gaussian random variables.
Our approach is in contrast with \cite{schillings2014scaling}, in
which the uncertain parameter is first represented by a truncated
Karhunen--Lo\`eve expansion on the eigenpairs of the prior covariance,
and then the coordinates corresponding to the Gaussian random
variables are transformed according to the Hessian of the negative
log-posterior with respect to these random variables. The advantage of
our approach is that we only need to compute the eigenpairs that are
important according to the Gaussian posterior covariance, which can
save considerable computational cost if a large number of the
eigenpairs of the prior covariance have to be kept in order to capture
the main uncertainty in the Gaussian posterior distribution. For
instance, an extremely large number of prior eigenpairs would have to
be used to capture the localized fine scales of the parameter field
for the Antarctic ice sheet flow inverse problem in
\cite{isaac2015scalable}, while many fewer eigenpairs of the Gaussian
posterior covariance are needed.
 
The sensitivity of the QoI with respect to different parameter
dimensions represented by the Gaussian random variables is
characterized by the fast decay of the eigenvalues of the Gaussian
posterior covariance. A dimension-adaptive sparse quadrature
\cite{gerstner2003dimension, chen2015new, Schillings2013} is then
constructed for the parametric integration problem. We establish under
certain assumptions on the decay of the eigenvalues of the Gaussian
prior and the regularity of the integrand that the convergence of the
adaptive sparse quadrature is bounded by
$\mathcal{O}(N^{-s})$, independent of the number of the parameter
dimensions. The exponent $s$ depends only on the algebraic decay
rate of the eigenvalues; the sparser the problem is, i.e., the
faster the eigenvalues decay, the faster the convergence becomes. In
particular, for $s$ larger than $1/2$, the deterministic sparse
quadrature achieves faster convergence than the stochastic MC or MCMC
quadrature. We demonstrate the concentration of the posterior
distribution in the region of the prior distribution, the efficacy of
the Hessian-based allocation of the quadrature points, and 
the convergence property of the sparse quadrature in both a linear and a
nonlinear inverse problem. 

The paper is organized as follows: Section \ref{sec:infdimBay}
presents the infinite-dimensional setting of Bayesian inverse
problems, computational tasks and challenges; the Hessian-based
parametrization is developed in Section \ref{sec:HessianPara};
Section \ref{sec:adaptivesparsequad} describes the adaptive
sparse quadrature and provides a dimension-independent convergence analysis;
Section \ref{sec:Numerical} is devoted to solution of two inverse
problems, one linear and the other nonlinear; and conclusions are
drawn in Section \ref{sec:conclusion}.  

\section{Infinite-dimensional Bayesian Inversion}
\label{sec:infdimBay}
\subsection{Overview}

Inverse problems consist of finding an unknown system input parameter given observational (or experimental) data on some system output. These problems are most often ill posed in the sense of Hadamard, i.e., violation of either existence or uniqueness, or the solution depends sensitively on the data. A classical remedy in a \emph{deterministic approach} is to solve \emph{a regularized least-squares minimization problem}. Alternatively, a {statistical approach}, formulated in the \emph{Bayesian framework}, seeks for the \emph{posterior probability distribution} of the parameter 
that maximizes the likelihood of the parameter to generate the observational data as system model output conditioned on its \emph{prior distribution} that incorporates available data or expert opinion/prior belief on the parameter. The latter approach is far richer than the former as it not only computes a \emph{maximum a posterior} (MAP) estimator as the most likely estimation of the parameter, but also quantifies the confidence of this estimator,  and provides the probability of a different estimator other than the MAP one. Moreover, different choices of the prior distribution make it versatile to model the available data or belief on the parameter. Furthermore, for a \emph{quantity of interest} (QoI) that depends on the parameter, possibly through the system state variable, more information can be assessed in the form of its statistics, such as mean, variance, and failure probability, which can be computed by either deterministic or statistical integration of the QoI with respect to the posterior distribution of the parameter. 

When the parameter is spatially correlated random field in the physical domain, the solution of the statistical inverse problems is the \emph{posterior measure} of the parameter in a function space, or more precisely a \emph{separable Banach space}, which is typically \emph{infinite-dimensional} as outlined in \cite{stuart2010inverse}. In this framework, an appropriate definition of the posterior measure is realized by the \emph{Radon--Nikodym} derivative with respect to the \emph{prior measure}, where the derivative is proportional to the \emph{likelihood function} based on \emph{conditional probability distribution}.  The prior measure of the parameter plays an important role to guarantee the well-posedness of the statistical inverse problems.
Moreover, it is the sparsity of the parameter encapsulated in the prior measure and successively inherited by the posterior measure that contributes to the development of sparse algorithms in sampling the parameter according to its posterior measure and computing the statistics of a given QoI. Among many choices of the prior measures, such as uniform, Besov, and Gaussian priors (see e.g. in \cite{dashti2013bayesian}), we consider the last one for its popularity in practical applications. 

In what follows, we present the Bayes' formula for the definition of the posterior measure in infinite dimensions in Section \ref{subsec:Bayes}, then in Section \ref{subsec:Stat} we address the central and computationally challenging task in Bayesian inverse problems, i.e. the computation of statistics of some QoI as an infinite-dimensional integration problem. Section \ref{subsec:Prior} is devoted to a practical construction of Gaussian priors.  

\subsection{Bayes' formula for infinite-dimensional inverse problems}
\label{subsec:Bayes}
By $X$ we denote a separable Banach space defined over the physical domain $D$, an open and bounded subset of $\bbR^d$ ($d = 1, 2, 3$) with Lipschitz boundary $\partial D$. 
By $m \in X$ we denote the uncertain system input parameter that belongs to $X$. We assume there exists a prior measure on $X$ for the parameter, denoted as $\mu_0$ such that $\mu_0(X) = 1$, i.e. functions randomly drawn from the prior measure lie in $X$ with probability 1. Moreover, we assume that the system output data $y$ live in a separable Banach space $Y$, which in most practical applications is a finite-dimensional Euclidean space $Y = \bbR^K$, for some $K \in \bbN$. By $\cG: X \to Y$ we denote the \emph{parameter-to-observable} map that maps the parameter $m$ to the observation data $y$, i.e. $y = \cG(m)$. Evaluations of $\cG$ involve the solution of the forward model, typically described by a partial differential equation (PDE). Furthermore, we assume that the data $y$ is corrupted by some addictive observation noise 
represented by a random variable $\eta \in Y$, i.e. 
\beq\label{eq:obsdata}
y = \cG(m)  + \eta\;.
\eeq
We assume the discrepancy $\eta$ to be Gaussian distributed with zero mean and covariance $\Gamma_{\text{noise}}$, i.e. $\eta \sim \cN(0,\Gamma_{\text{noise}})$, and we write the \emph{likelihood} (i.e. the probability density function of the observation data conditioned on the parameter, $y|m$) as
\beq\label{eq:potential}
\pi^y_{\text{like}}:= {\rm exp}\left( -\Phi(m,y) \right) = {\rm exp}\left(-\frac{1}{2} \|y - \cG(m)\|^2_{\Gamma_{\text{noise}}}\right),
\eeq
where $\Phi(m,y) := -\frac{1}{2} \|y - \cG(m)\|^2_{\Gamma_{\text{noise}}}$ denotes the \emph{negative log likelihood} or \emph{potential}. Here, we defined the $\Gamma_{\text{noise}}$ weighted norm as
\beq
 ||y - \cG(m)||^2_{\Gamma_{\text{noise}}} = \left( y - \cG(m) \right)^T \Gamma_{\text{noise}}^{-1} \left( y - \cG(m) \right).
\eeq 
The Bayes' formula establishes the relation between the posterior measure $\mu^y$ and the prior measure $\mu_0$ of the parameter through the Radon--Nikodym derivative 
\beq\label{eq:RadonNikodym}
\frac{d\mu^y}{d\mu_0} = \frac{1}{Z} \pi^y_{\text{like}},
\eeq
where the \emph{normalization} constant $Z$ is given by 
\beq
Z = \int_X \pi^y_{\text{like}} d\mu_0(m)\;. 
\eeq
A sufficient condition for the well-posedness of \eqref{eq:RadonNikodym} is the quadratic boundedness and the Lipschitz continuity of the map $\cG$ with respect to the parameter $m$, see \cite{stuart2010inverse}. Such conditions, which need to be verified for the specific PDE model at the hand, guarantee the absolute continuity of the posterior measure $\mu^y$ with respect to the prior measure $\mu_0$ and the positivity of the constant $Z$.

\subsection{Computation of statistics and challenges}
\label{subsec:Stat}

A central task of Bayesian inversion is to compute some statistics of a quantity of interest (QoI) $Q$ that depends on the parameter $m$, possibly through the solution of the PDE forward model $u$. For instance, we may be interested in computing the $k$-th moment of a functional $f$ of the solution $u$, i.e., $Q(m) = f^k(u(m))$, $k = 1, 2, \dots$. More precisely, for a given integrand $Q$, one needs to evaluate integrals with respect to the posterior measure of the form
\beq\label{eq:PostIntegral}
\bbE^{\mu^y}[Q] = \int_X Q(m) d\mu^y(m)\;.
\eeq
By the Bayes' formula \eqref{eq:RadonNikodym}, this is equivalent to evaluate the following integral with respect to the prior measure
\beq\label{eq:PriorIntegral}
\bbE^{\mu_0}[Q\pi^y_{\text{like}}]  =  \frac{1}{Z}\int_X Q(m) \exp\left(- \Phi(m,y)\right) d\mu_0(m).
\eeq

Evaluation of integrals \eqref{eq:PostIntegral} or \eqref{eq:PriorIntegral} is a notoriously computational challenge. First, the parameter is a random function in $X$ thus an infinite-dimensional integration needs to be performed, which confronts the \emph{curse-of-dimensionality} for most deterministic quadrature rules or \emph{slow convergence} for statistical quadrature rules. Second, each evaluation of $Q$ at a quadrature point (sample) involves the solution of the PDE forward problem, and therefore, for complex forward problems, standard algorithms for infinite-dimensional integration are computationally unfeasible due to the large number of evaluations of $Q$ required. In order to harness the total computational cost, we exploit the sparsity of the prior measure to design a novel sparse quadrature algorithm that breaks the curse-of-dimensionality and achieves fast convergence of the quadrature error.

\subsection{The Gaussian prior}
\label{subsec:Prior}
We first consider an abstract construction of the infinite-dimensional Gaussian prior through the pushforward of the probability measure $\bbP$ on the i.i.d. standard Gaussian random sequence $\bsxi = (\xi_j)_{j\geq 1}$  under a map that takes the sequence into the random function, as outlined in \cite{dashti2013bayesian}.
 In particular, we consider a real-valued separable Hilbert space $X$ defined over the physical domain $D$, which is equipped with the inner-product $\langle \cdot, \cdot \rangle$ and the induced norm $||\cdot||_{X}$. For such space $X$, we denote its orthonormal basis as $(\psi_j)_{j\geq 1}$. Then we can definite the map 
 \beq\label{eq:PushMap}
 m: \bbR^{\infty} \to X, \quad \text{ where } m(\bsxi) = m_0 + \sum_{j \geq 1} \sqrt{\lambda_j} \psi_j \xi_j , \quad \xi_j \stackrel{i.i.d.}{\sim} \cN(0,1)\;,
 \eeq
where the mean $m_0 \in X$, the sequence $\bslambda := (\lambda_j)_{j\geq 1} \in \ell^1(\bbR^\infty)$, and $\cN(0,1)$ represents the standard Gaussian distribution. We say that the map defined above is a random parameter in $X$ equipped with Gaussian measure. For a Gaussian prior measure $\mu_0$ defined through the pushforward map \eqref{eq:PushMap}, the integral \eqref{eq:PriorIntegral} can be equivalently written after the change of variables as 
\beq\label{eq:PostIntegralPara}
\bbE^{\mu_0}[Q\pi^y_{\text{like}}] = \frac{1}{Z}\int_\Xi Q(m(\bsxi)) \exp\left(- \Phi(m(\bsxi),y)\right) d\mu(\bsxi)\;,
\eeq 
with the normalization constant
\beq\label{eq:PriorIntegralPara}
 Z = \int_\Xi \exp\left(- \Phi(m(\bsxi),y)\right) d\mu(\bsxi)\;,
\eeq
where the integration domain becomes $\Xi = \bbR^\infty$, and the Gaussian measure $\mu$ on $\Xi$ is defined by the tensor product of the univariate Gaussian measure as
\beq 
\mu(\bsxi) = \bigotimes_{j\geq 1} \rho(\xi_j)d\xi_j, 
 \text{ where } \rho(\xi_j) = \frac{1}{\sqrt{2\pi}} e^{-\xi_j^2/2}\;.
\eeq
Thus the infinite-dimensional integration \eqref{eq:PriorIntegral} in the function space $X$ with measure $\mu_0$ becomes an infinite-dimensional parametric integration in $\Xi$ with measure $\mu$.

Before proceeding to the numerical integration with respect to the parametrized random function $m$, we have to make it precise for the construction of the orthonormal basis $(\psi_j)_{j \geq 1}$, which are functions in the physical domain $D$. For rectangular domain $D$, e.g. $(0,1)^d$,  tensor product of trigonometric functions or orthonormal polynomials such as Legendre polynomials could be used. For more general domain $D$ with arbitrary shape, we turn to the covariance operator of Gaussian prior for its practical construction, which, in the Hilbert space $X$, is given by \cite{stuart2010inverse}
\beq\label{eq:GaussPrior}
\cC_0 := \bbE^{\mu_0}[(m-m_0)\otimes (m-m_0)] \equiv  \bbE^{\mu}[(m-m_0)\otimes (m-m_0)] = \sum_{j\geq 1} \lambda_j \psi_j \otimes \psi_j\;,
\eeq
where we have used the parametric expression of $m$ in \eqref{eq:PushMap} and the i.i.d. property of $\xi$. By the orthonormality of the basis $(\psi_j)_{j\geq 1}$, it is ready to have
\beq\label{eq:priorKL}
\cC_0 \psi_k = \sum_{j\geq 1} \lambda_j \langle\psi_j, \psi_k\rangle  \psi_j = \lambda_k \psi_k, \quad k = 1, 2, \dots, 
\eeq
which implies that $(\lambda_j, \psi_j)_{j\geq 1}$ are the eigenpairs of the covariance operator $\cC_0$. Therefore, to obtain the parametric expression \eqref{eq:PushMap}, which is known as \emph{Karhunen--Lo\`eve expansion} of the random function $m$ with Gaussian measure $\cN(m_0, \cC_0)$, we only need to specify the mean $m_0$ and the covariance operator $\cC_0$. A commonly used covariance operator is the fractional powers of a `Laplacian-like' operator $\cA$, i.e. $\cC_0 = \cA^{-\alpha}$ for certain $\alpha > 0$, where $\cA$ is positive definite, self-adjoint, and invertible. 
A specific example of $\cA$, as considered in \cite{bui2013computational}, is the elliptic differential operator 
\beq\label{eq:elldiffoper}
\cA u  := - \beta \nabla \cdot( \Theta \nabla u) + \gamma u, \quad u \in H^1(D)\;,
\eeq 
where $H^1(D)$ is the conventional Hilbert space; $\beta , \gamma > 0$, which control the variance of the parameter, the larger $\beta$ and $\gamma$ are, the smaller the variance becomes; $\Theta \in \bbR_{s.p.d.}^{d\times d}$ is symmetric positive definite, which controls the anisotropic property of the parameter; the power $\alpha > d/2$ controls the asymptotic decay rate of the eigenvalues of $\cC_0$, which determines the \emph{sparsity} of the parametrization of the random function $m$ under the Gaussian prior measure $\cN(m_0, \cC_0)$. In practice, these coefficients can be calibrated based on direct available data on the parameter or expert opinions.

\section{Hessian-based Parametrization}
\label{sec:HessianPara}
Upon the parameterization of the random function as presented in the last section, evaluation of statistics for any given QoI in \eqref{eq:PostIntegral} and \eqref{eq:PriorIntegral} becomes the infinite-dimensional parametric integration in \eqref{eq:PostIntegralPara} and \eqref{eq:PriorIntegralPara}. However, direct numerical integration of these quantities brings great challenges not only because of the curse-of-dimensionality, but also due to the fact that the posterior measure might be rather different from the prior measure, for instance the mean of the posterior measure is driven far away from that of the prior by the observational data, or the posterior measure tends to be more concentrated in a small region than the prior measure. Therefore, the numerical quadrature points allocated either deterministically or statistically according to the prior measure might not efficiently capture the posterior measure, or in the parametric setting, the posterior `density' $\pi_{\text{like}}^y(\bsxi) \bsrho(\bsxi)$ (which is not a true density but zero in the infinite-dimensional setting, here we assume that $\xi$ is finitely supported) might be extremely small where the prior `density' $\bsrho(\bsxi)$ is relatively big, and vice versa. In order to tackle this challenge, we take the observation data into account and push the prior measure to the posterior measure or the one at least close to it accordingly by developing a Hessian-based parametrization method. We describe the MAP point and the Hessian of the cost functional in subsection \ref{subsec:MAP} and \ref{subsec:Hessian}, followed by the Hessian-based parametrization of the random function in subsection \ref{subsec:HessianPara}.

\subsection{The MAP point}
\label{subsec:MAP}
The MAP point characterizes the point in the parameter space where the posterior measure might center or concentrate at.
In the finite-dimensional setting, the MAP point is defined as the point where the probability density function (pdf) of the posterior distribution attains the maximum value. However, this definition does not directly generalize to the infinite-dimensional setting as the pdf of the posterior measure does not exist or it is zero at every point. Nevertheless, we can define the MAP point ${m_1} \in X$ as the one where the ball $B(m_c, \epsilon) = \{m \in X: ||m_c-m||_X \leq \epsilon\}$, centered at $m_c$ with radius $\epsilon$, attains the maximum probability given by $\mu^y(B(m_c, \epsilon))$ when $\epsilon \to 0$, i.e. 
\beq
m_1 := \lim_{\epsilon \to 0}  \argmax_{m \in X} \mu^y(B(m, \epsilon))\;.
\eeq
Under the assumption of Gaussian prior measure $\cN(m_0, \cC_0)$ with $\cC_0 = \cA^{-\alpha}$ defined in the last section, when the mean $m_0$ lies in the range of $\cC_0^{-1/2}$, or equivalently in the domain of the differential operator with fractional power $\cA^{\alpha/2}$, we have that ${m_1}$ also lies in this range. In fact, it is the \emph{Cameron--Martin} space denoted as $E$, which is a Hilbert space equipped with the inner-product $\langle \cdot, \cdot \rangle_{\cC_0} = \langle \cC_0^{-1/2} \cdot, \cC_0^{-1/2} \cdot \rangle$ and the induced norm $||m||^2_{\cC_0} = \langle m, m \rangle_{\cC_0}$. By the definition of the posterior measure in \eqref{eq:RadonNikodym}, we have that ${m_1}$ is the solution of the following minimization problem
\beq\label{eq:MAP}
\min_{m \in E} \cJ(m)
\eeq
where the \emph{cost functional} (a term corresponding to the regularized least-squares minimization problem in the deterministic inverse problems) $\cJ$ is defined as 
\beq\label{eq:cost}
\cJ(m) = \frac{1}{2} ||y - \cG(m) ||^2_{\Gamma_{\text{noise}}} + \frac{1}{2} ||m - m_0||^2_{\cC_0}\;,
\eeq
where the first term is the potential defined in \eqref{eq:potential} and the second term arises from the Gaussian prior assumption.
Under the quadratic boundedness and the Lipschitz continuity of the map $\cG$,  see in \cite{stuart2010inverse}, the optimization problem \eqref{eq:MAP} is well-defined and admit a unique solution. In the case where the parameter-to-observable map $\cG$ depends on the parameter through a PDE solution, which is of particular interest in many applications, we can solve the optimization problem by a Lagrangian variational approach, which bypasses the direct computation of the derivative of the map $\cG$ with respect to the parameter $m$. We will explain this approach in Section \ref{sec:Numerical} for specific PDE models. 

\subsection{The Hessian}
\label{subsec:Hessian}
The local curvature of $\cJ$ at the MAP point indicates the shape of the distribution of the parameter, which can be assessed by the Hessian of $\cJ$ evaluated at the MAP point, i.e.  
\beq\label{eq:HMAP}
H_{\text{MAP}} := D_m^2 \cJ(m)|_{m={m_1}} = H_{\text{misfit}} + \cC_0^{-1}\;,
\eeq
where we denote the Hessian of the misfit term--the potential \eqref{eq:potential}--evaluated at the MAP point as $H_{\text{misfit}}$, i.e. $H_{\text{misfit}} = D_m^2 \Phi(m)|_{m={m_1}}$. When the parameter-to-observable map $\cG$ is linear with respect to $m$, which we write as 
\beq\label{eq:linearInv}
\cG(m) = Gm\;,
\eeq 
where $G$ is a bounded linear map, i.e. $G \in \cL(X, Y)$, 
then the Hessian misfit becomes
\beq\label{eq:misfitHessian}
H_{\text{misfit}} = G^*\Gamma_{\text{noise}}^{-1} G\;,
\eeq
 where $G^*: Y \to X$ is the adjoint of $G$. When the map $\cG$ is nonlinear with respect to $m$, we can perform the linearization as an approximation of the $\cG$ as  
 \beq\label{eq:linearApprox}
 \cG(m) \approx \cG({m_1}) + \cG'({m_1})(m - {m_1})\;,
 \eeq
 where we have assumed that $\cG$ is Fr\'echet differentiable with respect to $m$ and by $\cG'$ we denote its Fr\'echet derivative. Under this linearization, the Hessian misfit can be obtained the same as in \eqref{eq:misfitHessian} with the linear map $G:=  \cG'({m_1})$. The linear approximation \eqref{eq:linearApprox} is reasonable when $\cG$ is nearly linear in the parameter space or at least in the region that the posterior measure is non-negligible. For instance when the observation noise is very small, or size of observation data is very big, the posterior measure decays very fast to zero for the parameter away from the MAP point \cite{dashti2013map}. In the case that the nonlinear map $\cG$ depends on the parameter through the PDE solution, we can also compute the Hessian misfit by a Lagrangian variational approach, which will be exemplified in Section \ref{sec:Numerical}.    
 
\subsection{Hessian-based parametrization}
\label{subsec:HessianPara}
For a linear parameter-to-observable map $\cG$, it is known (see \cite{stuart2010inverse}) that the covariance operator of the posterior measure $\cC_1$ is given by the inverse of the Hessian at the MAP point, i.e. 
\beq\label{eq:PostCovariance}
\cC_1 = H_{\text{MAP}}^{-1} = \left(H_{\text{misfit}} + \cC_0^{-1}\right)^{-1}\;,
\eeq 
which is self-adjoint, positive definite and of a trace-class compact operator. Consequently, the posterior measure is exactly the Gaussian measure $\mu_1 = \cN({m_1}, \cC_1)$, and the integration defined in \eqref{eq:PostIntegral} becomes  
\beq\label{eq:PostIntegralGauss}
\bbE^{\mu_1}[Q] := \int_X Q(m)d\mu_1(m)\;.
\eeq 
Let $(\lambda^{1}_j, \psi_j^{1})_{j \geq 1}$ denote the eigenpairs of the compact covariance operator $\cC_1$, then the parameter $m$ can be expressed by the Karhunen--Lo\`eve expansion as 
 \beq\label{eq:postm1}
 m^1(\bsxi) = m_1 + \sum_{j \geq 1} \sqrt{\lambda^{1}_j} \psi_j^{1} \xi_j, \quad \xi_j \stackrel{i.i.d.}{\sim} \cN(0,1)\;,
 \eeq
 Therefore, the integration defined in \eqref{eq:PostIntegralGauss} now becomes a parametric integration
 \beq\label{eq:PostInteGaussPara}
 \bbE^{\mu_1}[Q] = \int_{\Xi} Q(m^{1}(\bsxi)) d\mu(\bsxi)\;.
 \eeq 
We remark that the parametrization \eqref{eq:postm1} is obtained by the Karhunen--Lo\`eve expansion of the posterior covariance operator at the MAP point. A re-parametric formulation based on a transformation of the parametrization w.r.t. the prior covariance operator is presented in Appendix A, which is equivalent to our development above in the abstract functional space. However, they are rather different from the computational point of view, see the comparison in Appendix A. We also present an efficient computation of the Hessian-based parametrization in Appendix B.

For a nearly linear map $\cG$, we can also use this Gaussian measure as an approximation of the posterior measure and compute the integration \eqref{eq:PostIntegralGauss} to approximate the true integration \eqref{eq:PostIntegral}. For a nonlinear map $\cG$, we can compute the true integration \eqref{eq:PostIntegral} by changing the measure as used in importance sampling \cite{agapiou2015importance}
 \beq\label{eq:PostIntegraldu0du1}
 \bbE^{\mu^y}[Q]  = \frac{1}{Z}\int_X Q(m) \pi^{y,1}_{\text{like}}(m)d\mu_1(m), \text{ where } Z: = \int_X  \pi^{y,1}_{\text{like}}(m)d\mu_1(m)\;,
 \eeq 
 where the likelihood function $ \pi^{y,1}_{\text{like}}(m)$ is given by 
 \beq\label{eq:likelihood}
 \pi^{y,1}_{\text{like}}(m) =  \exp\left(\cJ({m_1})\right)\exp\left(-\Phi(m,y)\right) \frac{d\mu_0(m)}{d\mu_1(m)} =: \exp\left(-\cJ_1(m)\right)\;,
 \eeq
 where we have multiplied a rescaling constant $\exp\left(\cJ(m_1)\right)$ and defined
 \beq\label{eq:J1}
 \cJ_1(m) = \cJ(m) - \cJ(m_1) - \frac{1}{2}||m-m_1||_{\cC_1}^2\;, 
 \eeq
 where the cost functional $\cJ$ is defined in \eqref{eq:cost}. Note that, by construction, $\cJ_1(m)$ and its first and second derivative vanish for $m = m_1$, since $D_m \cJ|_{m = m_1} = 0$ and $D_{mm} \cJ|_{m = m_1} = {\cC_1}^{-1}$.
Under the parametrization \eqref{eq:postm1}, 
 the integrals \eqref{eq:PostIntegraldu0du1} become 
 \beq\label{eq:PostIntePara}
  \bbE^{\mu^y}[Q]  = \frac{1}{Z}\int_\Xi Q(m^1(\bsxi)) \pi^{y,1}_{\text{like}}(m^1(\bsxi))d\mu(\bsxi) \text{ and }  Z = \int_\Xi \pi^{y,1}_{\text{like}}(m^1(\bsxi))d\mu(\bsxi).
 \eeq

\section{Adaptive Sparse Quadrature for Bayesian Inversion}
\label{sec:adaptivesparsequad}
In this section, we present a sparse quadrature method for the computation of the parametric integration \eqref{eq:PostInteGaussPara} and \eqref{eq:PostIntePara} in the Bayesian inverse problems, following our work in \cite{chen2016adaptive}. We establish a dimension-independent convergence result of the sparse quadrature and present two algorithms for the construction of the sparse quadrature.
\subsection{Sparse quadrature}
For any integrable function $g: \Xi \to \bbR$, we define a sparse quadrature for the approximation of the integral $\bbE^\mu[g] = \int_\Xi g(\bsxi) d\mu(\bsxi)$ as 
\beq\label{eq:sparseQuad}
\cQ_\Lambda(g) = \sum_{\bsnu \in \Lambda} \triangle_\bsnu(g)\;.
\eeq
Here, $\bsnu = (\nu_1, \nu_2, \dots ) \in \cF$ is a multi-index in $\cF = \{\bsnu \in \bbN^\infty: |\bbJ_\bsnu| < \infty \}$, where $\bbJ_\bsnu$ is the support of $\bsnu$, i.e., $\bbJ_\bsnu = \{j \in \bbN: \nu_j > 0\}$. $\Lambda$ is an admissible index set satisfying
\beq
\text{for any } \bsnu \in \cF, \text{ if } \bsnu \in \Lambda, \text{ then } \bsmu \in \Lambda \text{ for all } \bsmu \preceq\bsnu \;,
\eeq 
 where $\bsmu \preceq\bsnu $ means $\mu_j \leq \nu_j \text{ for all } j \geq 1$. $\triangle_\bsnu$ is a tensor product difference quadrature operator defined by tensorization of a sequence of univariate difference operators  
 \beq\label{eq:TensorDiff}
 \triangle_\bsnu = \bigotimes_{j \geq 1} \triangle_{\nu_j} (g)\;,
 \eeq
 where the univariate difference quadrature operator is defined as 
 \beq
\triangle_{\nu_j} (g) = \cQ_{\nu_j}(g) - \cQ_{\nu_j-1}(g), \quad \nu_j = 0, 1, \dots\;.
 \eeq
Here the univariate quadrature operator $\cQ_{-1}(g) = 0$ and $\cQ_{\nu_j}(g)$ is given by 
 \beq
 \cQ_{\nu_j}(g) = \sum_{k_j = 0}^{m_{\nu_j}-1} w_j^{(k_j)} g(\xi_1, \dots, \xi_{j-1}, \xi_j^{(k_j)}, \xi_{j+1}, \dots), \quad \nu_j = 0, 1, \dots\;,
 \eeq
 where $(\xi_j^{(k_j)}, w_j^{(k_j)})$, $k_j = 0, \dots, m_{\nu_j}-1$, are the quadrature points and weights of certain quadrature rules, such as Gauss--Hermite and Genz--Keister, see \cite{chen2016adaptive} for details.

\subsection{Dimension-independent convergence rates}

In this section, we analyze the convergence of the quadrature error $|\bbE^{\mu}[g] - \cQ_{\Lambda_N}(g)|$ with respect to $N$, the cardinality of the admissible index set $\Lambda_N$, i.e., $N = |\Lambda_N|$, for $g$ in the context of the Bayesian inverse problems. 

We first present a regularity assumption of the parametric function $g: \Xi \to \bbR$ with respect to the parameter $\bsxi$, which is  \cite[Assumption 2]{chen2016adaptive}.

\begin{assumption}\label{ass:DeriBound}
Let $0< q < 1$, and $(\tau_j)_{j\geq 1}$ be a positive sequence such that 
\beq
(\tau_j^{-1})_{j\geq 1} \in \ell^q(\bbN)\;.
\eeq 
Let $r$ be the smallest integer such that $r > 14/q$,  we assume that the parametric function $g:\Xi \to \bbR$ satisfies 
\beq\label{eq:regularitytau}
\sum_{|\bsmu|_\infty \leq r} \frac{\bstau^{2\bsmu}}{\bsmu!} \int_{\Xi} |\partial^\bsmu_\bsxi g(\bsxi)|^2 d\mu(\bsxi) < \infty\;,
\eeq 
where $|\bsmu|_\infty = \max_{j\geq 1} \mu_j$, $\bstau^{2\bsmu}= \prod_{j\geq 1} \tau_j^{2\mu_j}$, $\bsmu! = \prod_{j\geq 1}\mu_j!$,  and $\partial_\bsxi^\bsmu g = \prod_{j\geq 1}\partial_{\xi_j}^{\mu_j} g$.
\end{assumption}


The following theorem from \cite{chen2016adaptive} provides sufficient condition for $g$ such that the sparse quadrature error converges with dimension-independent convergence rate. \pnote{Before stating the theorem, we introduce a sequence of useful constants 
\beq\label{eq:bnu}
b_\bsnu = \sum_{|\bsmu|_\infty\leq r} 
\left(
\begin{array}{cc}
\bsnu \\
\bsmu
\end{array}
\right) \bstau^{2\bsmu}, \quad \text{ with } 
\left(
\begin{array}{cc}
\bsnu \\
\bsmu
\end{array}
\right) = 
\prod_{j \geq 1}
\left(
\begin{array}{cc}
\nu_j \\
\mu_j
\end{array}
\right), \quad \bsnu \in \cF\;,
\eeq
which appear in the equivalence of \eqref{eq:regularitytau} and a weighted sum of the coefficients of the Hermite expansion of $g$, see \cite[Proposition 3.3]{chen2016adaptive}, or \cite[Theorem 3.1]{bachmayr2015sparse}.}

\begin{theorem}\label{thm:N-termConv} \cite[Theorem 3.5]{chen2016adaptive} Under Assumption \ref{ass:DeriBound},
there exists an admissible index set $\Lambda_N \subset \cF$, with the $N$ indexes $\bsnu$ corresponding to the smallest $b_\bsnu$ given by \eqref{eq:bnu},
and there exists a constant $C$ independent of $N$,
such that the error of the sparse quadrature \eqref{eq:sparseQuad} based on the Gauss--Hermite quadrature rule is bounded by
\beq
|\bbE^\mu[g]- \cQ_{\Lambda_N}(g)| \leq C (N+1)^{-s}, \quad s = \frac{1}{q} - \frac{1}{2}\;.
\eeq 
\end{theorem}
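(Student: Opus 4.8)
The plan is to follow the standard three-step paradigm for proving dimension-independent convergence of sparse (Smolyak-type) quadrature rules, adapting the argument from the Hermite/Gaussian-measure setting. The key structural fact to exploit is that Assumption \ref{ass:DeriBound} is equivalent to a summability statement on a weighted Hermite expansion of $g$, with the weights encoded in the constants $b_\bsnu$ of \eqref{eq:bnu}; this equivalence is precisely the content cited from \cite[Proposition 3.3]{chen2016adaptive}. So the first step is to invoke that equivalence to translate the regularity hypothesis \eqref{eq:regularitytau} into the bound $\sum_{\bsnu \in \cF} b_\bsnu |g_\bsnu|^2 < \infty$, where $g_\bsnu$ are the Hermite coefficients of $g$. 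From here the natural object to control is the sequence $(b_\bsnu^{-1/2})_{\bsnu \in \cF}$, and I would aim to show it lies in $\ell^p(\cF)$ for an appropriate $p < 1$ determined by $q$.

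The second step is the decisive one: establishing $\ell^p$-summability of $(b_\bsnu^{-1/2})_{\bsnu \in \cF}$. Because $b_\bsnu$ is a sum over $|\bsmu|_\infty \le r$ of binomial-weighted products $\bstau^{2\bsmu}$, one can bound $b_\bsnu$ from below by retaining a single well-chosen term (or a product over the active coordinates $j \in \bbJ_\bsnu$), which exposes a multiplicative lower bound in terms of the $\tau_j$. The hypothesis $(\tau_j^{-1})_{j \ge 1} \in \ell^q(\bbN)$ then furnishes the summability across dimensions, while the choice $r > 14/q$ guarantees enough room in the per-coordinate polynomial growth so that the tensorized sum converges with the correct exponent. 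This is a delicate combinatorial estimate: one must factorize the sum over $\cF$ into a product over coordinates of univariate sums, verify each univariate factor is finite and uniformly controlled, and track the exact relationship between $q$, $r$, and the resulting summability exponent $p$. I expect this to be the main obstacle, since the constants and the passage from $\ell^q$ on $(\tau_j^{-1})$ to $\ell^p$ on $(b_\bsnu^{-1/2})$ require careful bookkeeping of the binomial factors.

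The third step converts the $\ell^p$-summability into the stated algebraic convergence rate. Choosing $\Lambda_N$ to consist of the $N$ multi-indices with smallest $b_\bsnu$ (equivalently, largest $b_\bsnu^{-1}$), I would bound the tail error $|\bbE^\mu[g] - \cQ_{\Lambda_N}(g)|$ by the quadrature error of the difference operators $\triangle_\bsnu$ over the discarded indices $\cF \setminus \Lambda_N$. Using a univariate stability/accuracy estimate for the Gauss--Hermite difference operators against the weighted Hermite coefficients, the total error is dominated by a partial sum of the nonincreasing rearrangement of a sequence in $\ell^p$. A standard Stechkin-type lemma then yields that the sum of the $N$ largest-index tail is $O(N^{-s})$ with $s = 1/p - 1/2$; matching $1/p - 1/2 = 1/q - 1/2$ identifies $p = q$ (up to the combinatorial slack absorbed into the constant $C$) and delivers the claimed rate $s = 1/q - 1/2$. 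Since \cite[Theorem 3.5]{chen2016adaptive} is cited directly, I would present this as an application of that theorem: verify its single hypothesis (namely Assumption \ref{ass:DeriBound}, which is stated to be exactly the assumption needed), and then quote the conclusion, with the $b_\bsnu$ selection rule for $\Lambda_N$ taken verbatim from the reference.
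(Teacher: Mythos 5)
The paper itself offers no proof of this theorem: it is imported verbatim from \cite[Theorem 3.5]{chen2016adaptive}, and your closing remark---verify Assumption \ref{ass:DeriBound} and quote the conclusion---is exactly the treatment the paper adopts, so there is no in-paper argument to diverge from. Your reconstruction of the underlying proof is nonetheless the right skeleton and matches the cited reference's strategy: (i) the equivalence of \eqref{eq:regularitytau} with $\sum_{\bsnu\in\cF} b_\bsnu |g_\bsnu|^2 < \infty$ for the Hermite coefficients $g_\bsnu$ (this is precisely what the paper cites as \cite[Proposition 3.3]{chen2016adaptive}); (ii) $\ell^q$-type summability of $(b_\bsnu^{-1/2})_{\bsnu\in\cF}$ derived from $(\tau_j^{-1})_{j\geq 1}\in\ell^q(\bbN)$; (iii) a Stechkin argument over the discarded indices. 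Two points in your sketch deserve to be made explicit if this were to be a self-contained proof. First, the set of the $N$ smallest $b_\bsnu$ is admissible only because $b_\bsnu$ is monotone nondecreasing in $\bsnu$ (the paper invokes this in Section 4.3.1); without downward closedness the telescoping identity $\bbE^\mu[g]-\cQ_{\Lambda_N}(g)=\sum_{\bsnu\notin\Lambda_N}\triangle_\bsnu(g)$ does not hold. Second, the rate $s=1/q-1/2$ does not come from applying Stechkin directly to an $\ell^q$ sequence of error contributions (that would give $1/q-1$); it comes from a Cauchy--Schwarz step of the form $\sum_{\bsnu\notin\Lambda_N}|\triangle_\bsnu(g)| \leq \bigl(\sum_{\bsnu\notin\Lambda_N} c_\bsnu b_\bsnu^{-1}\bigr)^{1/2}\bigl(\sum_{\bsnu} b_\bsnu|g_\bsnu|^2\bigr)^{1/2}$, where $c_\bsnu$ absorbs the Gauss--Hermite operator-norm growth, followed by Stechkin applied to the tail of $(b_\bsnu^{-1})\in\ell^{q/2}(\cF)$ and a square root; this is also where the condition $r>14/q$ is consumed, since $\|\triangle_\bsnu\|$ must be controlled by the Hermite coefficients at indices $\bsmu\succeq\bsnu$ only. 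You correctly flag this combinatorial estimate as the main obstacle, but since the paper delegates it entirely to the reference, leaving it as a citation is the faithful reading.
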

\begin{remark}\label{rmk:convRate}
The convergence rate in Theorem \ref{thm:N-termConv} does not depend on the parameter dimensions but only on the sparsity parameter $q$ that controls the regularity of $g$. We remark that this convergence rate is not necessarily optimal. In fact, $s = 1/q + 1/2$ is observed in numerics \cite{chen2016adaptive}. Note also that as the number of quadrature points \pnote{$N_p$} in $\Lambda_N$ grows as $O(N^2)$, see \cite[Proposition 18]{ernst2016convergence}, so that the convergence with respect to the number of quadrature points is deteriorated to $N_p^{-s/2}$. However, similar convergence rate $N_p^{-s}$ is observed in practice because the Gauss--Hermite quadrature $\cQ_\nu$ with $m_\nu = \nu+1$ points is exact for polynomials of degree $2\nu+1$, \pnote{see \cite{chen2016adaptive}.}
\end{remark}

The dimension-independent convergence result in Theorem \ref{thm:N-termConv} applies once Assumption \ref{ass:DeriBound} can be verified, see \cite{chen2016adaptive} for examples of parametric functions and parametric PDEs which satisfy this assumption. Here, we verify Assumption \ref{ass:DeriBound} for the parametric QoI $Q$ integrated with respect to the parameter $m^1$ in \eqref{eq:PostInteGaussPara}, which is parametrized according to the posterior measure.  For the prior measure $\cN(m_0, \cC_{0})$, where the covariance operator is constructed as $\cC_0 = \cA^{-\alpha}$, we assume that $\cA$ is a `Laplacian-like' operator satisfying the following conditions as in \cite{stuart2010inverse}.
\begin{assumption}
\label{ass:LaplaceOperator}
$\cA$ is defined on a Hilbert space $X$ over $D\subset \bbR^d$ and fulfills 
\begin{enumerate}
\item[A.1] $\cA$ is positive definite, self-adjoint, and invertible with eigenpairs $\{(\lambda_j, \psi_j)\}_{j \geq 1}$, where the eigenfunctions form an orthonormal basis of $X$.

\item[A.2] $\lambda = (\lambda_j)_{j\geq 1}$ grows asymptotically as $ j^{2/d}$, or $\lambda^{-1} \in \ell^q(\bbR^\infty)$, $\forall q > d/2$.

\end{enumerate}
\end{assumption}
As the prior covariance $\cC_0 = \cA^{-\alpha}$, the larger $\alpha$ is, the more sparse or anisotropic the parameter $m$ becomes with respect to different dimensions of the random variables $\bsxi = (\xi_j)_{j\geq 1}$ in the Karhunen--Lo\`eve expansion \eqref{eq:compactm}. Here the sparsity is indicated by the decay of the eigenvalues $\lambda^0 = (\lambda_j^0)_{j \geq 1} \in \ell^q(\bbR^\infty)$ of the prior covariance $\cC_0$, where we have $q > d/(2\alpha)$ according to assumption A.2. We expect that this sparsity is inherited in the Gaussian posterior measure, i.e., the eigenvalues $\lambda^1 = (\lambda^1_j)_{j\geq 1}$ of the Gaussian posterior covariance $\cC_1$ is such that $ \lambda^1 \in \ell^q(\bbR^\infty)$. This is established in the following lemma.
\begin{lemma}\label{lem:uncertaintyReduction}
Let $(\lambda_j^0)_{j \geq 1}$ and $(\lambda^1_j)_{j\geq 1}$ denote the non-increasing sequence of eigenvalues of the Gaussian prior covariance operator $\cC_0 = \cA^{-\alpha}$ with $\alpha > d/2$ in Assumption \ref{ass:LaplaceOperator} and the Gaussian posterior covariance operator $\cC_1$ in \eqref{eq:PostCovariance}, respectively, if the Hessian misfit $H_{\text{misfit}}$ in \eqref{eq:HMAP} is positive semi-definite, we have
\begin{equation}\label{eq:lambdacomparison}
0 \leq \lambda_j^1 \leq \lambda_j^0, \quad \forall j \geq 1\;.
\end{equation}
Moreover, under Assumption \ref{ass:LaplaceOperator}, we have $\lambda^1 = (\lambda_j^1)_{j\geq 1} \in \ell^q(\bbR^\infty)$, $\forall q > d/(2\alpha)$. 
\end{lemma}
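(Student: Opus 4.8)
The plan is to compare the two covariance operators through their inverses (the precision operators) and then transfer the comparison to the individual eigenvalues by the Courant--Fischer min--max principle. From \eqref{eq:PostCovariance} and \eqref{eq:HMAP} we have the operator identity $\cC_1^{-1} = H_{\text{MAP}} = H_{\text{misfit}} + \cC_0^{-1}$. Since $H_{\text{misfit}}$ is assumed positive semi-definite, the associated quadratic forms obey
\beq
\langle \cC_1^{-1} v, v \rangle = \langle H_{\text{misfit}} v, v \rangle + \langle \cC_0^{-1} v, v \rangle \geq \langle \cC_0^{-1} v, v \rangle\;.
\eeq
Because the observation space $Y = \bbR^K$ is finite-dimensional, $H_{\text{misfit}} = G^*\Gamma_{\text{noise}}^{-1} G$ is a bounded (indeed finite-rank) operator on $X$, so the form domain of $\cC_1^{-1}$ coincides with that of $\cC_0^{-1}$, namely the Cameron--Martin space $E$; hence the inequality above holds for all $v \in E$.

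Next I would invoke the min--max characterization of eigenvalues. The operators $\cC_0^{-1}$ and $\cC_1^{-1}$ are self-adjoint, bounded below, and have compact resolvent, so their spectra are non-decreasing eigenvalue sequences $1/\lambda_j^0$ and $1/\lambda_j^1$, $j \geq 1$. The Courant--Fischer theorem writes the $j$-th smallest eigenvalue of such an operator $A$ as $\min_{\dim V = j}\max_{0 \neq v \in V} \langle A v, v\rangle/\langle v, v\rangle$, the minimum taken over $j$-dimensional subspaces $V$ of the common form domain $E$. The quadratic-form inequality then gives $1/\lambda_j^1 \geq 1/\lambda_j^0$ for every $j$, i.e. $\lambda_j^1 \leq \lambda_j^0$. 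The lower bound $\lambda_j^1 \geq 0$ is immediate from the positive definiteness of $\cC_1$ recorded after \eqref{eq:PostCovariance}. This proves \eqref{eq:lambdacomparison}.

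For the summability claim I would first record the decay of the prior eigenvalues. Since $\cC_0 = \cA^{-\alpha}$, its eigenvalues are $\lambda_j^0 = \lambda_j^{-\alpha}$, where $\lambda_j$ are the eigenvalues of $\cA$; by A.2 these grow like $j^{2/d}$, so $\lambda_j^0$ decays like $j^{-2\alpha/d}$, and consequently $\sum_{j\geq 1}(\lambda_j^0)^q < \infty$ precisely when $2\alpha q/d > 1$, i.e. for every $q > d/(2\alpha)$. The pointwise domination $0 \leq \lambda_j^1 \leq \lambda_j^0$ from the first part gives $(\lambda_j^1)^q \leq (\lambda_j^0)^q$ term by term, whence $\sum_{j\geq 1}(\lambda_j^1)^q \leq \sum_{j\geq 1}(\lambda_j^0)^q < \infty$, so that $\lambda^1 \in \ell^q(\bbR^\infty)$ for all $q > d/(2\alpha)$.

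The main obstacle is not the algebra but the functional-analytic bookkeeping for the unbounded precision operators $\cC_0^{-1}$ and $\cC_1^{-1}$: one must ensure that the two operators share the same form domain, so that both the quadratic-form inequality and the min--max principle are applied on a common space, and that each has a discrete spectrum accumulating only at $+\infty$. The boundedness of $H_{\text{misfit}}$, guaranteed here by the finite dimension of the observation space together with $G \in \cL(X,Y)$, is exactly what secures these requirements, so the comparison goes through cleanly. Were $H_{\text{misfit}}$ merely a nonnegative form relatively bounded with respect to $\cC_0^{-1}$, the same conclusion would still hold, but identifying the form domain would require a KLMN-type form-perturbation argument.
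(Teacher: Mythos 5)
Your proof is correct, but it takes the dual route to the paper's. You compare the precision operators: from $\cC_1^{-1}=H_{\text{misfit}}+\cC_0^{-1}\geq\cC_0^{-1}$ the quadratic-form inequality is immediate, and you then apply Courant--Fischer to the \emph{unbounded} operators $\cC_0^{-1},\cC_1^{-1}$ on their common form domain $E$ to get $1/\lambda_j^1\geq 1/\lambda_j^0$. The paper instead works entirely with the \emph{compact} covariance operators: it writes $\cC_1=\cC_0^{1/2}(\cC_0^{1/2}H_{\text{misfit}}\cC_0^{1/2}+I)^{-1}\cC_0^{1/2}$, diagonalizes the prior-preconditioned Hessian to obtain the explicit decomposition $\cC_1=\cC_0-\cC_0^{1/2}\cD\cC_0^{1/2}$ with $\cD\geq 0$, and then applies the sup-inf form of Courant--Fischer to $\cC_0$ and $\cC_1$ directly. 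What your approach buys is brevity --- the form inequality costs nothing --- at the price of the functional-analytic bookkeeping you correctly identify (common form domain, discrete spectrum of the perturbed precision operator), which you discharge by noting that $H_{\text{misfit}}$ is bounded, indeed finite-rank, since $Y=\bbR^K$; this is essentially the same implicit assumption the paper needs for $\cC_0^{1/2}H_{\text{misfit}}\cC_0^{1/2}$ to admit a discrete spectral decomposition. What the paper's route buys is that it never leaves the bounded-operator setting, and the intermediate identity $\cC_1=\cC_0-\cC_0^{1/2}\cD\cC_0^{1/2}$ is of independent use (it underlies the low-rank posterior covariance approximation in Appendix B). Your summability argument via termwise domination $(\lambda_j^1)^q\leq(\lambda_j^0)^q$ is the same as the paper's.
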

\begin{proof}
For a positive semi-definite Hessian misfit $H_{\text{misfit}}$, we have 
\beq
\langle v, (H_{\text{misfit}} + \cC_0^{-1}) v \rangle \geq \langle v,  \cC_0^{-1} v \rangle \geq \lambda_1^\alpha > 0, \quad \forall v \in X \text{ and } ||v||_X = 1\;,
\eeq
where $\lambda_1$ is the smallest eigenvalue of $\cA$ in Assumption \ref{ass:LaplaceOperator}. Therefore, we have 
\beq
0 \leq \langle v, (H_{\text{misfit}} + \cC_0^{-1})^{-1} v \rangle \leq \lambda_1^{-\alpha}, \quad \forall v \in X \text{ and } ||v||_X = 1\;,
\eeq
which implies, by definition of $\cC_1$ in \eqref{eq:PostCovariance}, that $0 \leq \lambda_j^1 \leq \lambda_1^{-\alpha}$. Moreover, we can write 
\begin{equation}\label{eq:cC1}
\cC_1 = \cC_0^{1/2} (\cC_0^{1/2}H_{\text{misfit}} \cC_0^{1/2} + I)^{-1} \cC_0^{1/2}.
\end{equation}
As the Hessian misfit $H_{\text{misfit}}$ is self-adjoint (by definition) and positive semi-definite (by assumption), so is $\cC_0^{1/2}H_{\text{misfit}} \cC_0^{1/2}$ by Assumption \ref{ass:LaplaceOperator}. Hence, we can write 
\beq\label{eq:cC0H}
\cC_0^{1/2}H_{\text{misfit}} \cC_0^{1/2} = \sum_{j\geq 1} \tilde{\lambda}_j \tilde{\psi}_j \otimes \tilde{\psi}_j,
\eeq
where $(\tilde{\lambda}_j, \tilde{\psi}_j )_{j\geq 1}$ are the eigenpairs of $\cC_0^{1/2}H_{\text{misfit}} \cC_0^{1/2}$ with $\tilde{\lambda}_j \geq 0$, $\forall j \geq 1$. 
A combination of \eqref{eq:cC1} and \eqref{eq:cC0H} yields 
\beq\label{eq:cC1cC0}
\cC_1 = \cC_0 - \cC_0^{1/2} \cD \cC_0^{1/2}, \quad \text{ where } \cD = \sum_{j\geq 1} \frac{\tilde{\lambda}_j}{1+\tilde{\lambda}_j} \tilde{\psi}_j \otimes \tilde{\psi}_j.
\eeq
Since $\cC_0$ and $\cC_0^{1/2} \cD \cC_0^{1/2}$ are both compact and self-adjoint, so is $\cC_1$. By Courant--Fischer (min-max) theorem \cite{reed1978methods}, 
the eigenvalues of the prior covariance operator $\cC_0$ and the Gaussian posterior covariance operator $\cC_1$ can be written as 
\begin{equation}
\lambda_j^i := \sup_{\substack{V_j \in X \\ \text{dim}(V_j) = j}} \inf_{\substack{v \in V_j \\ ||v||_{X} = 1}} \langle v, \cC_i v \rangle, \quad j\geq 1, i = 0, 1,
\end{equation}
so that by \eqref{eq:cC1cC0} we have 
\beq\label{eq:lambda01}
\begin{split}
\lambda_j^1 & = \sup_{\substack{V_j \in X \\ \text{dim}(V_j) = j}} \inf_{\substack{v \in V_j \\ ||v||_{X} = 1}} \langle v, (\cC_0 - \cC_0^{1/2} \cD \cC_0^{1/2})v \rangle \\
& \leq \sup_{\substack{V_j \in X \\ \text{dim}(V_j) = j}} \inf_{\substack{v \in V_j \\ ||v||_{X} = 1}} \left( \langle v, \cC_0 v \rangle - \lambda_{\text{min}}\left(\cC_0^{1/2} \cD \cC_0^{1/2}\right) \right)
\\
& \leq \sup_{\substack{V_j \in X \\ \text{dim}(V_j) = j}} \inf_{\substack{v \in V_j \\ ||v||_{X} = 1}} \langle v, \cC_0 v \rangle = \lambda_j^0, \quad j \geq 1,
\end{split}
\eeq
where the minimum eigenvalue of $\cC_0^{1/2} \cD \cC_0^{1/2}$ satisfies $ \lambda_{\text{min}}\left(\cC_0^{1/2} \cD \cC_0^{1/2}\right) \geq 0$ since both $\cC_0$ in \eqref{eq:GaussPrior} and $\cD$ in \eqref{eq:cC1cC0} are positive. Therefore, \eqref{eq:lambdacomparison} is established. Under Assumption \ref{ass:LaplaceOperator}, we have $\lambda^0 = (\lambda_j^0)_{j\geq 1} \in \ell^q(\bbR^\infty) $, $\forall q > d/(2\alpha)$, so that by \eqref{eq:lambdacomparison} we obtain $\lambda^1 = (\lambda_j^1)_{j\geq 1} \in \ell^q(\bbR^\infty)$, $\forall q > d/(2\alpha)$.
\end{proof}

\begin{corollary}
For linear (and linearized) inverse problems, where the Hessian misfit is given by \eqref{eq:misfitHessian}, which is positive semi-definite, \eqref{eq:lambdacomparison} holds.
\end{corollary}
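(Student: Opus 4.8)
The plan is to reduce the statement entirely to Lemma \ref{lem:uncertaintyReduction}, whose only hypothesis is that the misfit Hessian $H_{\text{misfit}}$ be positive semi-definite. So the single thing I need to establish is that the explicit form \eqref{eq:misfitHessian}, $H_{\text{misfit}} = G^*\Gamma_{\text{noise}}^{-1} G$, is positive semi-definite; once that is in hand, the eigenvalue comparison \eqref{eq:lambdacomparison} follows verbatim from the lemma with no further work.

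First I would unfold the quadratic form of $H_{\text{misfit}}$ using the definition of the adjoint $G^*$, moving $G$ across the inner product. For any $v \in X$,
\beq
\langle v, H_{\text{misfit}} v\rangle = \langle v, G^*\Gamma_{\text{noise}}^{-1} G v\rangle = \langle Gv, \Gamma_{\text{noise}}^{-1} G v\rangle = ||Gv||^2_{\Gamma_{\text{noise}}},
\eeq
where the final equality is just the definition of the $\Gamma_{\text{noise}}$-weighted norm introduced in Section \ref{subsec:Bayes}. Since $\Gamma_{\text{noise}}$ is a genuine noise covariance, it is symmetric positive definite, hence so is $\Gamma_{\text{noise}}^{-1}$, and therefore the weighted norm is nonnegative. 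This gives $\langle v, H_{\text{misfit}} v\rangle \geq 0$ for every $v \in X$, i.e., $H_{\text{misfit}}$ is positive semi-definite. (It is also self-adjoint, since $(G^*\Gamma_{\text{noise}}^{-1}G)^* = G^*\Gamma_{\text{noise}}^{-1}G$, although only semi-definiteness is required to invoke the lemma.)

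For the linearized case the argument is identical: the only change is that $G$ is replaced by the Fr\'echet derivative $\cG'({m_1})$ from \eqref{eq:linearApprox}, and the computation above uses no structural property of $G$ beyond boundedness and the existence of its adjoint. With positive semi-definiteness established, Lemma \ref{lem:uncertaintyReduction} immediately yields \eqref{eq:lambdacomparison}. I do not anticipate a genuine obstacle here: the only points needing care are bookkeeping, namely that the middle inner product in the display lives in $Y$ rather than in $X$, and that the strict positive definiteness of $\Gamma_{\text{noise}}$ is what guarantees the weighted norm is well defined and nonnegative. Both are secured by the modeling assumptions of Section \ref{subsec:Bayes}.
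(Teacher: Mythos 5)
Your proposal is correct and is exactly the argument the paper intends: the corollary follows from Lemma \ref{lem:uncertaintyReduction} once one observes $\langle v, G^*\Gamma_{\text{noise}}^{-1}Gv\rangle = \|Gv\|^2_{\Gamma_{\text{noise}}} \geq 0$, which the paper treats as immediate and does not even write out. No gaps.
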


Based on the above analysis for the decay of the eigenvalues of the Gaussian posteriori covariance, Theorem \ref{thm:NrateBIP} is established for the dimension-independent convergence of the sparse quadrature error for the integration of certain quantities of interest with respect to the posterior measure \eqref{eq:PostInteGaussPara}. Before stating this theorem, let us present a summability result from \cite{cohen2010convergence} in the following lemma. 

\begin{lemma}\label{lem:ellpsum}
\cite[Theorem 7.2]{cohen2010convergence}
For any $p \leq 1$, the sequence $(\frac{|\bsmu|_1!}{\bsmu!} \bseta^\bsmu)_{\bsmu \in \cF} \in \ell^p(\cF)$ for a sequence $\bseta \in \bbR^\infty$, 
if and only if $|\bseta|_1 < 1$ and $\bseta \in \ell^p(\bbN)$.
\end{lemma}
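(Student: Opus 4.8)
Although this statement is quoted verbatim from \cite[Theorem 7.2]{cohen2010convergence}, here is how I would prove it from scratch. Writing $b_j=|\eta_j|$ and
\[ x_\bsmu:=\frac{|\bsmu|_1!}{\bsmu!}\,b^\bsmu\ge 0,\qquad S_p:=\sum_{\bsmu\in\cF}x_\bsmu^{\,p}, \]
membership $(\tfrac{|\bsmu|_1!}{\bsmu!}\bseta^\bsmu)_{\bsmu}\in\ell^p(\cF)$ means precisely $S_p<\infty$, so the claim is that $S_p<\infty$ iff $|\bseta|_1=\sum_j b_j<1$ and $\bseta\in\ell^p(\bbN)$. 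Two elementary facts drive the whole argument: the multinomial generating identity $\sum_{\bsmu\in\cF}\frac{|\bsmu|_1!}{\bsmu!}z^\bsmu=(1-\sum_j z_j)^{-1}$, valid whenever $\sum_j z_j<1$, and the tensorized geometric sum $\sum_{\bsmu\in\cF}\prod_j t_j^{\mu_j}=\prod_j(1-t_j)^{-1}$, valid whenever $\sup_j t_j<1$ and $\sum_j t_j<\infty$.

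For necessity I would test $S_p$ on two families of indices. The singletons $\bsmu=\bse_j$ give $x_{\bse_j}=b_j$, hence $\sum_j b_j^p\le S_p$, which yields $\bseta\in\ell^p$. For the mass bound I would use that $t\mapsto t^p$ is subadditive for $0<p\le 1$, so that restricting the degree-$n$ sum to indices supported on $\{1,\dots,m\}$ gives $\sum x_\bsmu^p\ge(\sum x_\bsmu)^p=r_m^{np}$ with $r_m:=\sum_{j\le m}b_j$, by the multinomial identity. Summing the geometric series, $S_p\ge(1-r_m^p)^{-1}$ when $r_m<1$ and $S_p=\infty$ when some $r_m\ge 1$; since $r_m\uparrow|\bseta|_1$, finiteness of $S_p$ forces $|\bseta|_1<1$ strictly.

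Sufficiency is the substantive half, and I would obtain it through a weighted Hölder inequality. For any positive weight $c_\bsmu$, splitting $x_\bsmu^p=(x_\bsmu c_\bsmu)^p\,c_\bsmu^{-p}$ and applying Hölder with the conjugate exponents $1/p$ and $1/(1-p)$ gives
\[ S_p\le\Big(\sum_{\bsmu\in\cF}x_\bsmu c_\bsmu\Big)^p\Big(\sum_{\bsmu\in\cF}c_\bsmu^{-p/(1-p)}\Big)^{1-p}. \]
Choosing the multiplicative weight $c_\bsmu=\prod_j d_j^{\mu_j}$ makes both sums explicit via the two identities above: the first equals $(1-\sum_j b_j d_j)^{-1}$ and the second equals $\prod_j(1-d_j^{-p/(1-p)})^{-1}$. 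Thus the entire task reduces to exhibiting a sequence $d_j>1$ with $\sum_j b_j d_j<1$ and $\sum_j d_j^{-p/(1-p)}<\infty$.

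The hard, and essentially only, point is the design of this weight: a single power law $d_j\propto b_j^{-(1-p)}$ collapses both constraints to $\sum_j b_j^p<1$, which is strictly stronger than the hypotheses and uses $|\bseta|_1<1$ nowhere. The remedy, and the place where the two hypotheses enter separately, is a head/tail split. I would fix $\theta\in(|\bseta|_1,1)$, invoke $\bseta\in\ell^p$ to choose a finite set $F$ with $\sum_{j\notin F}b_j^p<1-\theta$, and set $d_j=\theta/|\bseta|_1$ for $j\in F$ and $d_j=b_j^{-(1-p)}$ for $j\notin F$. Then $\sum_j b_j d_j\le(\theta/|\bseta|_1)\sum_{j\in F}b_j+\sum_{j\notin F}b_j^p<\theta+(1-\theta)=1$, where $|\bseta|_1<1$ controls the head, while $\sum_j d_j^{-p/(1-p)}=|F|(\theta/|\bseta|_1)^{-p/(1-p)}+\sum_{j\notin F}b_j^p<\infty$, where $\bseta\in\ell^p$ controls the tail; one checks $d_j>1$ throughout. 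Both Hölder factors are then finite, so $S_p<\infty$, which completes the proof.
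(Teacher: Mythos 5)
The paper does not actually prove this lemma --- it is imported verbatim from Cohen--DeVore--Schwab with only a citation --- so there is no in-paper argument to compare against; what matters is whether your self-contained proof is sound, and it is. The necessity half (singletons for $\ell^p$ membership, subadditivity of $t\mapsto t^p$ plus the multinomial identity for the strict mass bound $|\bseta|_1<1$) is clean, and the sufficiency half via weighted H\"older with the multiplicative weight $c_\bsmu=\prod_j d_j^{\mu_j}$ correctly reduces everything to designing $(d_j)$, where your head/tail split is exactly the right move and makes transparent which hypothesis does which job ($|\bseta|_1<1$ for the finitely many large entries, $\ell^p$ summability for the tail). This is a genuinely different packaging from the cited source, which argues by splitting each multi-index $\bsnu$ into its restriction to a finite head set and the complement and estimating the multinomial factor combinatorially; your H\"older route buys a shorter and arguably more modular argument, at the cost of the (standard) duality step. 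Two trivial degeneracies are worth a sentence in a polished write-up: for $p=1$ the conjugate exponent $1/(1-p)$ is undefined, but then $\bseta\in\ell^1$ is implied by $|\bseta|_1<1$ and $S_1=(1-|\bseta|_1)^{-1}$ follows directly from the generating identity; and indices with $b_j=0$ should be discarded before setting $d_j=b_j^{-(1-p)}$ (likewise $|\bseta|_1=0$ is trivial). Neither affects correctness.
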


\begin{theorem}\label{thm:NrateBIP}
Under Assumption \ref{ass:LaplaceOperator}, suppose a functional $Q: X \to \bbR$ satisfies
\beq\label{eq:boundDkQ}
\int_{\Xi} |D^{k}_m Q(m^1(\bsxi)) (\psi^1_{j_1},\cdots, \psi^1_{j_k})|^2 d\mu(\bsxi) < C k!, \quad k = 0, 1, \dots\;,
\eeq
with a constant $C < \infty $, where 
$D^{k}_m Q(m^1(\bsxi)) (\psi^1_{j_1},\cdots, \psi^1_{j_k})$ is the $k$-th derivative of $Q$ with respect to $m$ acting in the eigendirections $(\psi^1_{j_1},\cdots, \psi^1_{j_k})$ of $\cC_1$ in \eqref{eq:PostCovariance}, $j_i \in \bbN$, $i = 1, \dots, k$, and evaluated at $m^1(\bsxi)$ in \eqref{eq:postm1}. Then there exists an admissible index set $\Lambda_N$ with the $N$ indexes $\bsnu \in \Lambda_N$ corresponding to the smallest $b_\bsnu$ in \eqref{eq:bnu}, and there exists a constant $C$ independent of $N$, such that the convergence of the sparse quadrature error is bounded by
\beq\label{eq:NboundGeneral}
|\bbE^\mu(Q) - \cQ_{\Lambda_N}(Q)| \leq C (N+1)^{-s}, \quad \forall \; s < \frac{\alpha}{d}-1\;.
\eeq  
\end{theorem}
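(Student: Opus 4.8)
The plan is to verify Assumption \ref{ass:DeriBound} for the parametric integrand $g(\bsxi) := Q(m^1(\bsxi))$ and then invoke Theorem \ref{thm:N-termConv}. The essential structural fact I would exploit is that the parametrization \eqref{eq:postm1} is \emph{affine} in $\bsxi$, so that $\partial_{\xi_j} m^1(\bsxi) = \sqrt{\lambda^1_j}\,\psi^1_j$ is independent of $\bsxi$ and the chain rule produces no lower-order correction terms. Consequently, for every $\bsmu \in \cF$,
\beq
\partial^\bsmu_\bsxi g(\bsxi) = (\lambda^1)^{\bsmu/2}\, D^{|\bsmu|_1}_m Q(m^1(\bsxi))\big(\psi^1_{j_1},\dots,\psi^1_{j_{|\bsmu|_1}}\big),
\eeq
where $(\lambda^1)^{\bsmu/2} = \prod_{j\geq 1}(\lambda^1_j)^{\mu_j/2}$ and the eigendirection $\psi^1_j$ appears with multiplicity $\mu_j$. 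Squaring, integrating over $\Xi$, and applying the hypothesis \eqref{eq:boundDkQ} with $k = |\bsmu|_1$ then yields
\beq
\int_\Xi |\partial^\bsmu_\bsxi g(\bsxi)|^2\, d\mu(\bsxi) \leq C\,|\bsmu|_1!\,(\lambda^1)^\bsmu.
\eeq

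Next I would insert this bound into the left-hand side of \eqref{eq:regularitytau} and, discarding the restriction $|\bsmu|_\infty \leq r$ (which only enlarges the sum), reduce the verification to establishing
\beq
\sum_{\bsmu\in\cF} \frac{\bstau^{2\bsmu}}{\bsmu!}\,|\bsmu|_1!\,(\lambda^1)^\bsmu = \sum_{\bsmu\in\cF}\frac{|\bsmu|_1!}{\bsmu!}\,\bseta^\bsmu < \infty, \qquad \eta_j := \tau_j^2\,\lambda^1_j.
\eeq
By Lemma \ref{lem:ellpsum} with $p=1$, this series converges provided $|\bseta|_1 = \sum_{j\geq 1}\tau_j^2\lambda^1_j < 1$, which for $p=1$ already entails $\bseta\in\ell^1$. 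It then remains to choose $(\tau_j)_{j\geq 1}$ so that this normalization holds while $(\tau_j^{-1})_{j\geq 1}\in\ell^q$ for $q$ as small as possible. Here I would combine Lemma \ref{lem:uncertaintyReduction} with Assumption A.2: since $\lambda^1_j \leq \lambda^0_j = \lambda_j^{-\alpha}$ decays like $j^{-2\alpha/d}$, setting $\tau_j^2 = j^{\,2\alpha/d-\gamma}$ for a fixed $\gamma>1$ makes $\eta_j \lesssim j^{-\gamma}$ summable (and, after rescaling $\tau_j\mapsto c\,\tau_j$ by a small constant $c$, gives $|\bseta|_1<1$), while $(\tau_j^{-1})^q = j^{-q(2\alpha/d-\gamma)/2}$ is summable precisely when $q > \tfrac{2d}{2\alpha-\gamma d}$.

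Finally I would feed the admissible $q$ into Theorem \ref{thm:N-termConv}, obtaining the rate $s = 1/q - 1/2 = \tfrac{\alpha}{d} - \tfrac{\gamma+1}{2}$; letting $\gamma \downarrow 1$ drives $q \downarrow \tfrac{2d}{2\alpha-d}$ and $s \uparrow \tfrac{\alpha}{d}-1$, so the bound \eqref{eq:NboundGeneral} follows for every $s < \alpha/d - 1$. The main obstacle is the balancing act in the choice of $(\tau_j)$: the same sequence must keep $\tau_j^2\lambda^1_j$ summable (forcing $\tau_j$ not too large) yet keep $\tau_j^{-1}$ in $\ell^q$ for $q$ near the critical value $\tfrac{2d}{2\alpha-d}$ (forcing $\tau_j$ not too small), and it is exactly the slack $\gamma>1$ required for \emph{strict} summability of $\bseta$ that accounts for the strict inequality $s<\alpha/d-1$. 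A secondary point worth stating carefully is the chain-rule identity above, whose clean product form relies entirely on the affine structure of \eqref{eq:postm1}.
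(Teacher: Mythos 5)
Your proposal is correct and follows essentially the same route as the paper's proof: verify Assumption \ref{ass:DeriBound} for $g(\bsxi)=Q(m^1(\bsxi))$ via the affine chain rule $\partial_{\xi_j} m^1 = \sqrt{\lambda_j^1}\,\psi_j^1$, bound the weighted derivative sum by $C\sum_{\bsmu}\frac{|\bsmu|_1!}{\bsmu!}\bseta^{\bsmu}$ with $\eta_j=\tau_j^2\lambda_j^1$, apply Lemma \ref{lem:ellpsum} with $p=1$, and choose $\tau_j\sim j^{\beta}$ with $\beta<\alpha/d-1/2$ (your $\gamma$-parametrization is equivalent to the paper's) before invoking Theorem \ref{thm:N-termConv}. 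The only cosmetic difference is that you make the choice of $(\tau_j)$ and the limit $s\uparrow\alpha/d-1$ slightly more explicit.
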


\begin{proof}
By Theorem \ref{thm:N-termConv}, to obtain the bound of the convergence rate \eqref{eq:NboundGeneral} we only need to find a sequence $(\tau_j)_{j\geq 1}$ such that the regularity assumption \eqref{eq:regularitytau} can be verified for the integrand $g(\bsxi) = Q(m^1(\bsxi))$. In fact, we have
\beq\label{eq:1postbound}
\begin{split}
& \sum_{|\bsmu|_\infty \leq r} \frac{\bstau^{2\bsmu}}{\bsmu!} \int_{\Xi} |\partial^\bsmu_\bsxi g(\bsxi)|^2 d\mu(\bsxi) \\
& = \sum_{|\bsmu|_\infty \leq r} \frac{\bstau^{2\bsmu} (\lambda^1)^{\bsmu}}{\bsmu!} \int_{\Xi} |D^{|\bsmu|_1}_m Q(m^1(\bsxi))(\psi^1)^\bsmu|^2 d\mu(\bsxi)\\
& \leq C \sum_{|\bsmu|_\infty \leq r} \frac{|\bsmu|_1!}{\bsmu!} \bseta^{\bsmu} \leq C \sum_{\bsmu \in \cF} \frac{|\bsmu|_1!}{\bsmu!}\bseta^{\bsmu}
\end{split}
\eeq 
where we have denoted $(\psi^1)^\bsmu = (\psi_{j_1}, \cdots, \psi_{j_{k}})$ with $j_{i} \in \bbJ_\bsmu$ exhausting $|\bsmu|_1$ indexes in the support of $\bsmu$, $i = 1, \dots, k $ with $k = |\bsmu|_1$, and $\bseta = (\eta_j)_{j\geq 1}$ with $\eta_j = \tau_j^2 \lambda_j^1$. In the equality, we used the chain rule for the derivative $\partial_{\xi_j} Q(m^1(\bsxi)) = D_m Q(m^1(\bsxi)) \partial_{\xi_j} m^1(\bsxi)$ and $\partial_{\xi_j} m^1(\bsxi) = (\lambda_j^1)^{1/2} \psi_j^1$. The first inequality is due to the assumption \eqref{eq:boundDkQ} and the second inequality is due to $\{|\bsmu|_\infty \leq r\} \subset \cF$, see \cite[Theorem 4.1]{bachmayr2015sparse}. By Lemma \ref{lem:ellpsum} with $p = 1$, we only need to find $\bstau$ such that $|\bseta|_1 < 1$. By Assumption \ref{ass:LaplaceOperator} and Lemma \ref{lem:uncertaintyReduction}, we set the sequence $(\tau_j)_{j\geq 1}$ as $\tau_j \sim j^\beta$ with $2\beta - 2\alpha/d < - 1$, i.e., $\beta < \alpha/d - 1/2$, such that $\sum_{j\geq 1} \tau_j^2 \lambda_j^1 < 1$. Hence, we have $(\tau_j^{-1})_{j\geq 1} \in \ell^q(\bbN)$ for any $q > 1/\beta$. Consequently, by Theorem \ref{thm:N-termConv} we obtain the convergence rate $N^{-s}$ for any $s = 1/q - 1/2 < \alpha/d - 1$.
\end{proof}

\begin{remark}
In practice, the eigenfunctions $(\psi^1_j)_{j\geq 1}$ of $\cC_1$ are not given for the verification of the assumption \eqref{eq:boundDkQ}. In such case, a stronger condition \pnote{I would keep stronger instead of using sufficient as \eqref{eq:boundDkQ} is already a sufficient condition} for $Q$ is 
\beq\label{eq:boundDkQX}
\int_{\Xi} ||D^k_m Q(m^1(\bsxi))||_{(X')^k}^2 d\mu(\bsxi) < C k!, \quad k = 0, 1, \dots\;,
\eeq
where $X'$ is the dual space of $X$. \eqref{eq:boundDkQX} implies \eqref{eq:boundDkQ} by the orthonormality of $(\psi^1_j)_{j\geq 1}$ in $X$. In the case $\max_{x\in D, j\geq 1} |\psi^1_j(x)| < C$ for some constant $C$, we can also assume
\beq\label{eq:boundDkQ1}
\int_{\Xi} |D^{k}_m Q(m^1(\bsxi)) (1,\cdots, 1)|^2 d\mu(\bsxi) < C^{2k} k!, \quad k = 0, 1, \dots\;,
\eeq
then the convergence \eqref{eq:NboundGeneral} is followed for $\tau_j \sim j^\beta$ such that $\sum_{j\geq 1} \tau_j^2 \lambda_j^1 < 1/C^2$.
\end{remark}

The result in the above theorem holds for general (nonlinear) Bayesian inverse problems as long as the integrand in \eqref{eq:PostIntePara} satisfies the assumption \eqref{eq:boundDkQ}. It allows very quick growth, $O(k!)$, of the $k$-th derivative of $Q$ with respect to the parameter $m$.
In particular, we obtain an explicit result for the integration \eqref{eq:PostInteGaussPara} in the case of linear Bayesian inverse problems as follows.

\begin{theorem}\label{thm:InverseConv}
For any bounded linear functional $f$ of the parameter $m^1$ \eqref{eq:postm1}, let $Q$ be its $k$-th power, $k = 1, 2, \dots$, i.e., $Q(m^1) = f^k(m^1)$, there exists an admissible index set $\Lambda_N$ with the $N$ indexes $\bsnu \in \Lambda_N$ corresponding to the smallest $b_\bsnu$ in \eqref{eq:bnu}, and there exists a constant $C$ independent of $N$, such that the convergence of the sparse quadrature error is bounded by
\beq\label{eq:Nbound}
|\bbE^\mu(Q) - \cQ_{\Lambda_N}(Q)| \leq C (N+1)^{-s}, \quad \forall \; s < \frac{\alpha}{d}-1\;.
\eeq
\end{theorem}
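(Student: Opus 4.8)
The plan is to obtain this as a direct corollary of Theorem \ref{thm:NrateBIP}. Since Assumption \ref{ass:LaplaceOperator} is in force and $f$ is bounded and linear, it suffices to verify that the integrand $Q = f^k$ (with $k$ the \emph{fixed} power) satisfies the derivative bound \eqref{eq:boundDkQ}; the stated rate $s < \alpha/d - 1$ then follows verbatim from \eqref{eq:NboundGeneral}. To keep the two roles of the letter separate I will reserve $k$ for the power of $f$ and use $n$ for the order of differentiation appearing in \eqref{eq:boundDkQ}.

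First I would compute the Fr\'echet derivatives of $Q(m) = (f(m))^k$ with respect to $m$. Because $f$ is linear, $D_m f = f$ and all higher derivatives of $f$ vanish, so the power rule gives, for the derivative of order $n$ acting on directions $v_1,\dots,v_n$,
\[
D^n_m Q(m)(v_1,\dots,v_n) = \frac{k!}{(k-n)!}\,(f(m))^{k-n} \prod_{i=1}^n f(v_i), \quad 0 \le n \le k,
\]
while $D^n_m Q \equiv 0$ for $n > k$. The crucial structural observation is that $Q$ is a polynomial of degree $k$ in $m$ through $f$, so only the finitely many orders $n \in \{0,1,\dots,k\}$ are nonzero.

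Next I would bound, for $0 \le n \le k$, the integral in \eqref{eq:boundDkQ} with the directions taken to be eigenfunctions $\psi^1_{j_1},\dots,\psi^1_{j_n}$ of $\cC_1$. Using $\|\psi^1_j\|_X = 1$ and boundedness of $f$, one has $|f(\psi^1_{j_i})| \le \|f\|$, whence
\[
\int_\Xi |D^n_m Q(m^1(\bsxi))(\psi^1_{j_1},\dots,\psi^1_{j_n})|^2 d\mu(\bsxi) \le \Big(\frac{k!}{(k-n)!}\Big)^2 \|f\|^{2n} \int_\Xi |f(m^1(\bsxi))|^{2(k-n)} d\mu(\bsxi),
\]
\emph{uniformly} in the eigen-indices $j_1,\dots,j_n$. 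The remaining integral is the $2(k-n)$-th moment of the scalar quantity $f(m^1(\bsxi)) = f(m_1) + \sum_j \sqrt{\lambda^1_j}\, f(\psi^1_j)\,\xi_j$, which is a Gaussian random variable with mean $f(m_1)$ and variance $\sum_j \lambda^1_j f(\psi^1_j)^2 \le \|f\|^2 \mathrm{tr}(\cC_1) < \infty$, the finiteness coming from the trace-class property of $\cC_1$ established via \eqref{eq:PostCovariance} and Lemma \ref{lem:uncertaintyReduction}. Consequently all these Gaussian moments are finite.

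Finally, since only the orders $n = 0,\dots,k$ contribute, each yielding a finite bound $M_n := (k!/(k-n)!)^2 \|f\|^{2n} \int_\Xi |f(m^1)|^{2(k-n)} d\mu$, I would simply set $C = \max_{0 \le n \le k} M_n/n!$, which makes \eqref{eq:boundDkQ} hold for every derivative order (the inequality being vacuous for $n > k$). With \eqref{eq:boundDkQ} in hand, Theorem \ref{thm:NrateBIP} delivers \eqref{eq:Nbound}. The only genuine, if mild, obstacles are establishing that the per-direction bound is uniform over the eigen-indices (handled by orthonormality of the $\psi^1_j$) and that the Gaussian moments are finite (handled by $\cC_1$ being trace class); the $O(n!)$ growth permitted in \eqref{eq:boundDkQ} is never actually exercised, because the polynomial degree $k$ truncates the derivative expansion and leaves a finite supremum.
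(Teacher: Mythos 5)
Your proof is correct, but it follows a genuinely different route from the paper's. You derive the statement as a corollary of Theorem \ref{thm:NrateBIP} by verifying the derivative bound \eqref{eq:boundDkQ} for $Q=f^k$: since $f$ is linear, $D^n_m Q(m)(v_1,\dots,v_n)=\frac{k!}{(k-n)!}f(m)^{k-n}\prod_i f(v_i)$ vanishes for $n>k$, the per-direction bound is uniform in the eigen-indices by $|f(\psi^1_j)|\le\|f\|_{X'}$, and the residual factor is a moment of the scalar Gaussian $f(m^1(\bsxi))$ whose variance is controlled by $\operatorname{tr}(\cC_1)<\infty$ via Lemma \ref{lem:uncertaintyReduction} — so a finite maximum over $n=0,\dots,k$ supplies the constant and the $O(n!)$ allowance is never needed. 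The paper instead bypasses Theorem \ref{thm:NrateBIP} and verifies Assumption \ref{ass:DeriBound} directly for the integrand $g(\bsxi)=f^k(m^1(\bsxi))$: it expands the weighted sum $\sum_{|\bsmu|_\infty\le r}\frac{\bstau^{2\bsmu}}{\bsmu!}\int|\partial^\bsmu_\bsxi g|^2d\mu$ explicitly (first for $k=1$, then for general $k$), bounds the Gaussian moments by $\bbE^\mu[f^{2i}]\le\|f\|_{X'}^{2i}(\sum_l\lambda^1_l)^i(2i-1)!!$, and then chooses $\tau_j\sim j^\beta$ with $\beta<\alpha/d-1/2$ before invoking Theorem \ref{thm:N-termConv}. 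Both arguments rest on the same two pillars — Lemma \ref{lem:uncertaintyReduction} for the summability of $(\lambda^1_j)$ and the orthonormality of $(\psi^1_j)$ — and reach the identical rate. Your version is shorter and more modular, since the work of choosing $\bstau$ and summing over multi-indices is delegated once and for all to Theorem \ref{thm:NrateBIP}; the paper's version is self-contained and exhibits the explicit constants and the role of the weight sequence, at the cost of repeating the machinery of the earlier proof.
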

\begin{proof}
By Theorem \ref{thm:N-termConv}, to obtain the bound of the convergence rate \eqref{eq:Nbound} we only need to find a sequence $(\tau_j)_{j\geq 1}$ such that the regularity assumption \eqref{eq:regularitytau} can be verified for the quantity of interest $Q$. In fact, for $Q(m^1(\bsxi)) = f(m^1(\bsxi))$, we have
\beq\label{eq:bound1post}
\begin{split}
& \sum_{|\bsmu|_\infty \leq r} \frac{\bstau^{2\bsmu}}{\bsmu!} \int_{\Xi} |\partial^\bsmu_\bsxi Q(m^1(\bsxi))|^2 d\mu(\bsxi) \\
& = \int_{\Xi} |f(m^1(\bsxi))|^2 d\mu(\bsxi) +  \sum_{j\geq 1} \tau_j^2 \int_{\Xi} |\partial_{\xi_j}f(m^1(\bsxi))|^2 d\mu(\bsxi)\\
& = f^2(m_1) + \sum_{j\geq 1} \lambda^1_j f^2(\psi_j^1) \bbE^\mu[\xi_j^2] + \sum_{j\geq 1} \tau_j^2 \lambda_j^1 f^2(\psi_j^1) \bbE^\mu[1]\\
& \leq f^2(m_1) + ||f||^2_{X'} \sum_{j\geq 1} \lambda^1_j  + ||f||^2_{X'} \sum_{j\geq 1} \tau_j^2  \lambda_j^1\;,
\end{split}
\eeq 
where the first equality is due to the linearity of $f$ and the definition of $m^1$ in \eqref{eq:postm1}, the first inequality is due to the boundedness of $f$ and the orthonormality of $\psi_j^1$, i.e., $f^2(\psi_j) \leq ||f||^2_{X'} ||\psi_j^1||_X = ||f||^2_{X'}$. Therefore, the assumption \eqref{eq:regularitytau} is verified if $\sum_{j\geq 1} \lambda_j^1 < \infty$ and $\sum_{j\geq 1} \tau_j^2 \lambda_j^1 < \infty$. The former is ensured by Lemma \ref{lem:uncertaintyReduction}. As for the latter, by Assumption \ref{ass:LaplaceOperator} and Lemma \ref{lem:uncertaintyReduction}, we find the sequence $(\tau_j)_{j\geq 1}$ as $\tau_j \sim j^\beta$ with $2\beta - 2\alpha/d < - 1$, i.e., $\beta < \alpha/d - 1/2$. Hence, we have $(\tau_j^{-1})_{j\geq 1} \in \ell^q(\bbN)$ for any $q > 1/\beta$. Consequently, by Theorem \ref{thm:N-termConv} we obtain the convergence rate $N^{-s}$ for any $s = 1/q - 1/2 < \alpha/d - 1$.

For any $k > 1$, with $g(\bsxi) = f^k(m^1(\bsxi))$ we have 
\beq
\begin{split}
& \sum_{|\bsmu|_\infty \leq r} \frac{\bstau^{2\bsmu}}{\bsmu!} \int_{\Xi} |\partial^\bsmu_\bsxi Q(m^1(\bsxi))|^2 d\mu(\bsxi) \\
& \leq  \int_{\Xi} |f^k(m^1(\bsxi))|^2 d\mu(\bsxi) +  k \sum_{j\geq 1} \tau_j^2 \lambda_j^1 f^2(\psi_j^1) \int_{\Xi} |f^{k-1}(m^1(\bsxi))|^2 d\mu(\bsxi)\\
& + \cdots + k! \sum_{j_1, \dots, j_k \geq 1} (\tau_{j_1}^2\lambda_{j_1}^1 f^2(\psi_{j_1}^1)) \cdots  (\tau_{j_k}^2\lambda_{j_k}^1 f^2(\psi_{j_k}^1)) \int_{\Xi} |f^{0}(m^1(\bsxi))|^2 d\mu(\bsxi)\\
& \leq \bbE^\mu[f^{2k}] + k ||f||_{X'}^2 \sum_{j\geq 1} \tau_j^2 \lambda_j^1 \bbE[f^{2(k-1)}] + \cdots + k! ||f||_{X'}^{2k} \left(\sum_{j\geq 1} \tau_j^2 \lambda_j^1\right)^k \bbE^\mu[f^0],
\end{split}
\eeq 
which is bounded as long as $\bbE^\mu[f^{2i}] < \infty $, for all $i = 0, \dots, k$, and $\beta < \alpha/d - 1/2$ for $\tau_j \sim j^\beta$. $\bbE^\mu[f^{2i}] < \infty $ are verified for $i = 0$ and $i = 1$ in \eqref{eq:bound1post}. Similarly, for any $i = 2, \dots, k$, by setting $\xi_0 = 1$, $\lambda_0^1 = 1$, and $\psi_0^1 = m_1$, we have 
\beq
\begin{split}
\bbE^\mu[f^{2i}] &= \sum_{j_1, \dots, j_{2i} \geq 0} \sqrt{\lambda_{j_1}^1} f(\psi_{j_1}^1) \cdots \sqrt{\lambda_{j_{2i}}^1} f(\psi_{j_{2i}^1}) \bbE^\mu[\xi_{j_1} \cdots \xi_{j_{2i}}]\\
& = \sum_{l_1, \dots, l_i \geq 0} \lambda_{l_1}^1 f^2(\psi_{l_1}^1) \cdots \lambda_{l_i}^1 f^2(\psi_{l_i}^1) \bbE^\mu[\xi^2_{l_1} \cdots \xi^2_{l_i}]\\
& \leq ||f||_{X'}^{2i} \left(\sum_{l\geq 0} \lambda_l^1\right)^i (2i-1)!! < \infty \;,
\end{split}
\eeq
where the equality is due to $\bbE^\mu[\xi_j^n] = 0$ for any $j \geq 1$ and odd $n \in \bbN$, the first inequality is due to $\bbE^\mu[\xi^2_{l_1} \cdots \xi^2_{l_i}] \leq \bbE^\mu[\xi^{2i}] = (2i-1)!! $, $\forall l_1, \dots, l_i \geq 0$, where $\xi \sim N(0, 1)$, and the last inequality with $\sum_{l\geq 0} \lambda_l^1 < \infty$ is due to Lemma \ref{lem:uncertaintyReduction}.
\end{proof}

\begin{remark}
Note that we need $\alpha > d$ in Theorem \ref{thm:NrateBIP} and \ref{thm:InverseConv} to guarantee the convergence of the sparse quadrature and $\alpha > 2d$ (so $s = \alpha/d - 1 \geq 1$) to ensure a convergence rate of $N_p^{-s/2} \leq N_p^{-1/2}$ with respect to the number of quadrature points $N_p$, see Remark \ref{rmk:convRate}. However, in practice, we observe faster convergence than predicted by \eqref{eq:Nbound} as it is only an upper bound, see computational evidence in \cite{chen2016adaptive}, where the asymptotic convergence rate is observed to be $N_p^{-(s+1/2)}$ with respect to the number of quadrature points $N_p$, which implies a much weaker requirement of $\alpha > d$ to achieve the convergence as fast as $N_p^{-1/2}$, see the numerical tests in Sec \ref{sec:Numerical}. 
\end{remark}
%

\subsection{Adaptive construction}

We follow \cite{chen2016adaptive} to present two algorithms for the construction of the sparse quadrature, including \emph{a priori} construction guarantees the dimension-independent convergence rate in Theorem \ref{thm:N-termConv}, and a goal-oriented \emph{a posteriori} construction based on \emph{a posteriori} error indicator -- the difference quadrature $\triangle_\bsnu(g)$ in \eqref{eq:TensorDiff} that depends on each specific function $g$. Even though it can not guarantee the dimension-independent convergence rate in theory but turns out to be as accurate as the \emph{a priori} construction in practice. 
 
\subsubsection{A priori construction}
\label{sec:aprioricons}

From Theorem \ref{thm:N-termConv} we observe that the dimension-independent convergence rate of the sparse quadrature can be achieved by choosing the admissible index set $\Lambda_N$ with indexes $\bsnu \in \cF$ corresponding to the largest value of $b_{\bsnu}$. While we can compute $b_\bsnu$ for all the indexes $\bsnu \in \cF_{r,J}$ where 
\beq
 \cF_{r,J}= \{\bsnu \in \cF: |\bsnu|_{\infty} \leq r, \text{ and } \nu_j = 0 \text{ for } j > J \}\;,
\eeq
it is expensive/unfeasible if $r$ and $J$ are very large or infinite. Thanks to the monotonic increasing property of $b_\bsnu$, see \cite{chen2016adaptive}, we can adaptively construct the admissible index set $\Lambda_N$ by Algorithm \ref{alg:SparseQuad} when $(\tau_j)_{j\geq 1}$ is arranged in increasing order. We point out that this \emph{a priori} construction does not depend on the function $g$ once the Assumption \ref{ass:DeriBound} can be verified for certain $q$, $\bstau$ and $r$. However, it is not always straightforward or possible to verify this assumption especially for nonlinear function with respect to the parameter, e.g. the integrand for the nonlinear inverse problems \eqref{eq:PostIntePara}. Due to this difficulty, we adopt a goal-oriented \emph{a posteriori} construction. 

\subsubsection{Goal-oriented a posteriori construction}
\label{sec:aposteriori}
We present a goal-oriented \emph{a posteriori} construction of the sparse quadrature based on a dimension-adaptive tensor-product quadrature. It is initially developed in \cite{gerstner2003dimension} by taking advantage of the different importance of different dimensions, or different regularity of $f$ with respect to different $y_j$, $j \in \bbJ$, which we call \emph{adaptive sparse quadrature}, whose associated grid $G_{\Lambda}$ is called \emph{adaptive sparse grid}. The basic idea is based on the following adaptive process: given an admissible index set $\Lambda$, we search an index $\bsnu \in \cF$ among the forward neighbors of $\Lambda$ ($\bsnu \in \cF $ is called a forward neighbor of $\Lambda$ if $\Lambda \cup \bsnu$ is still admissible), at which $||\triangle_\bsnu||_\cS$ is maximized, and add this index to the index set $\Lambda = \Lambda \cup \{\bsnu\}$. As the number of forward neighbors depends on the dimension $J$ (in fact, the forward neighbors of $\bsnull$ are $\bse_j$ for all $j \in \bbJ$), in high or infinite dimensions, we can not search over all the forward neighbors. In such cases, it is usually reasonable to assume that the higher the dimensions, the less important they are, as determined e.g., by the fast decaying eigenvalues in Karhunen--Lo\`eve representation of the high/infinite dimensional random field. Therefore, we can explore the forward neighbors dimension by dimension in the set (see, e.g., \cite{Schillings2013, chkifa2014high})
\beq\label{eq:ForwNeib}
\cN(\Lambda) := \{\bsnu \not \in \Lambda: \bsnu - \bse_j \in \Lambda, \forall j \in \bbJ_\bsnu \text{ and } \nu_j = 0\;, \forall j > j(\Lambda)+1\},
\eeq
where $\bbJ_\bsnu = \{j: \nu_j \neq 0\}$; $j(\Lambda)$ is the smallest $j$ such that $\nu_{j+1} = 0$ for all $\bsnu \in \Lambda$. 

The adaptive sparse quadrature can be constructed following a basic greedy algorithm proposed in \cite{gerstner2003dimension}, which was improved on the data structure in \cite{klimke2006uncertainty} to copy with very high dimensions (e.g., up to $10^4$ dimensions in a personal laptop with $16$GB memory). We present the goal-oriented \emph{a posteriori} construction in Algorithm \ref{alg:SparseQuad}. Note that for the \emph{a priori} construction in this algorithm, we do not need to evaluate step 5 and 18 if the maximum number of indexes is imposed as the only stopping criterion. We can also replace the maximum number of indexes by the maximum number of points $|G_{\Lambda \cup \cN(\Lambda)}|$. Moreover, for step 13 it is also a common practice to chose $\bsnu$ as $\bsnu = \argmax_{\bsmu \in \cN(\Lambda_N)} ||\triangle_{\bsmu}(f)||_\cS /|G_\bsmu|$ to balance the error and the work, e.g., \cite{gerstner2003dimension, nobile2016adaptive}.

\begin{algorithm}
\caption{Adaptive sparse quadrature}
\label{alg:SparseQuad}
\begin{algorithmic}[1]
\STATE{\textbf{Input: } tolerance $\epsilon$, maximum number of indexes $N_{\text{max}}$, function $f$.}
\STATE{\textbf{Output: } the admissible index set $\Lambda_N$, quadrature $\cQ_{\Lambda_N}(f)$.}
\STATE{Set $N= 1$, $\Lambda_N = \{\bsnull\}$, and compute $\cQ_{\Lambda_N}(f)$.}
\STATE{Construct the forward neighbor set $\cN(\Lambda_N)$ by \eqref{eq:ForwNeib}.}
\STATE{Compute $\triangle_\bsnu(f)$ for all $\bsnu \in \cN(\Lambda_N)$ by \eqref{eq:TensorDiff}.}
\IF {\emph{a priori} construction}
\STATE{Compute $b_{\bsnu}$ for all $\bsnu \in \cN(\Lambda_N)$ by \eqref{eq:bnu}.}
\ENDIF
\WHILE{$\max_{\bsmu \in \cN(\Lambda_N)}||\triangle_{\bsmu}(f)||_\cS > \epsilon$ \textbf{and} $N < N_{\text{max}}$}
\IF {\emph{a priori} construction}
\STATE{Take $\bsnu = \argmax_{\bsmu \in \cN(\Lambda_N)} b_\bsnu$.}
\ELSE 
\STATE{Take $\bsnu = \argmax_{\bsmu \in \cN(\Lambda_N)} ||\triangle_{\bsmu}(f)||_\cS$.}
\ENDIF
\STATE{Enrich the index set $\Lambda_{N+1} = \Lambda_N \cup \{\bsnu\}$}.
\STATE{Set $\cQ_{\Lambda_{N+1}}(f) = \cQ_{\Lambda_N}(f) + \triangle_{\bsnu}(f)$.}
\STATE{Construct the forward neighbor set $\cN(\Lambda_{N+1})$ by \eqref{eq:ForwNeib}}.
\STATE{Compute $\triangle_\bsnu(f)$ for all $\bsnu \in \cN(\Lambda_{N+1})$ by \eqref{eq:TensorDiff}.}
\STATE{Set $ N \leftarrow N + 1$}.
\ENDWHILE
\end{algorithmic}
\end{algorithm}

\section{Numerical Experiments}
\label{sec:Numerical}
In this section, we present both a linear and a nonlinear inverse problem to demonstrate the concentration of the posterior distribution and the convergence property of the sparse quadrature.

\subsection{A linear inverse problem \pnote{with analytic solution}}
\subsubsection{Problem setup}
We first consider an analytic linear inverse problem to
explicitly demonstrate the variance reduction of the uncertain parameter from its prior distribution to its posterior distribution and to illustrate the dependence of the reduction on the prior distribution and the observation noise.
In particular, we consider Poisson's equation in a physical domain $D\subset \bbR^d \; (d = 1, 2, 3)$ with homogeneous Dirichlet boundary condition: find $u \in H_0^1(D)$ such that 
\begin{equation}\label{eq:Poisson}
\begin{split}
-\triangle u &= m \quad \text{ in } D, 
\end{split}
\end{equation}
where $m$ is a Gaussian random field with prior distribution $\cN(m_0, \cC_0)$, and $\triangle$ is the Laplace operator. We consider $\cC_0 =(-\beta \triangle)^{-\alpha}$ with the smoothness parameter $\alpha > d/2$, and the variance parameter $\beta > 0$. By $\{(\lambda_j, \psi_j)\}_{j\geq 1}$ we denote the eigenpairs of $-\triangle$, so that the eigenpairs of $\cC_0$ are given by $\{((\beta\lambda_j)^{-\alpha}, \psi_j)\}_{j\geq 1}$, and we can write
\beq\label{eq:LaplaceEigen}
\cC_0 = \sum_{j \geq 1} \lambda_j^0 \psi_j \otimes \psi_j, \quad \text{ where } \lambda_j^0 = (\beta\lambda_j)^{-\alpha}.
\eeq
We assume that the observation data is given by 
\beq
y = u + \eta,
\eeq
where $\eta \sim \cN(0, \sigma^2 I)$, with $\sigma > 0$ and $I$ being the identity operator in $L^2(D)$. Conditioned on the observation data, we seek the posterior distribution of $m$, which is an infinite-dimensional linear Bayesian inverse problem. 
Due to linearity, the posterior distribution is also Gaussian, i.e., $m \sim \cN(m_{1}, \cC_{1})$ with
\beq
m_1 = \argmin_{m} \cJ(m) = \cC_1 \left((-\beta \triangle)^\alpha m_0- \sigma^{-2} \triangle^{-1}y\right).
\eeq
and
\beq
\cC_1 =  (D^2_m J(m))^{-1} = (\sigma^{-2} \triangle^{-2} + (-\beta \triangle)^{\alpha})^{-1},
\eeq
where the cost functional $\cJ(m)$ reads
\beq
\cJ(m) = \frac{1}{2 \sigma^2}||y - \triangle^{-1}m||_{L^2(D)}^2 + \frac{1}{2}||m-m_0||^2_{\cC_0}.
\eeq
Note that $\cC_1$ admits the eigendecomposition with the same eigenfunctions of $\cC_0$
\beq\label{eq:C1}
\cC_1 = \sum_{j\geq 1} \lambda_j^1 \psi_j \otimes \psi_j, \quad \text{ where } \lambda_j^1 = \frac{(\beta\lambda_j)^{-\alpha}}{\sigma^{-2} \beta^{-\alpha}\lambda_j^{-\alpha - 2} + 1}.
\eeq
Fig. \ref{fig:EigenDecayExam1} displays the decay of the square root of the eigenvalues of the prior and the posterior covariance operators with different noise level $\sigma = 10^{-1}, 10^{-2}$, different prior variance parameter $\beta = 10^{-1}, 10^{-2}$, and different smoothness parameter $\alpha = 1, 2$.

\begin{figure}[!htb]
\begin{center}
\includegraphics[scale=0.34]{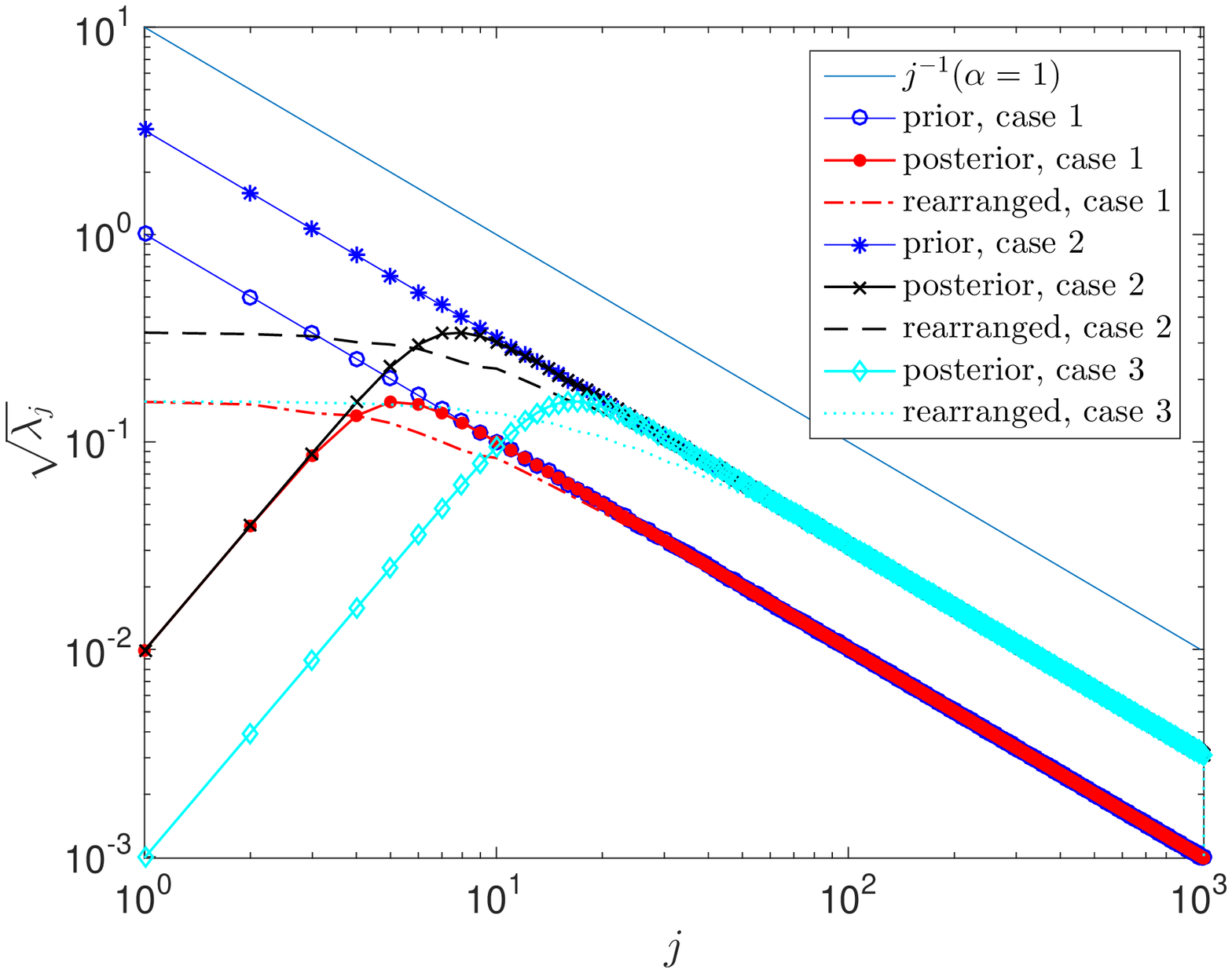}
\hspace*{0.1cm}
\includegraphics[scale=0.34]{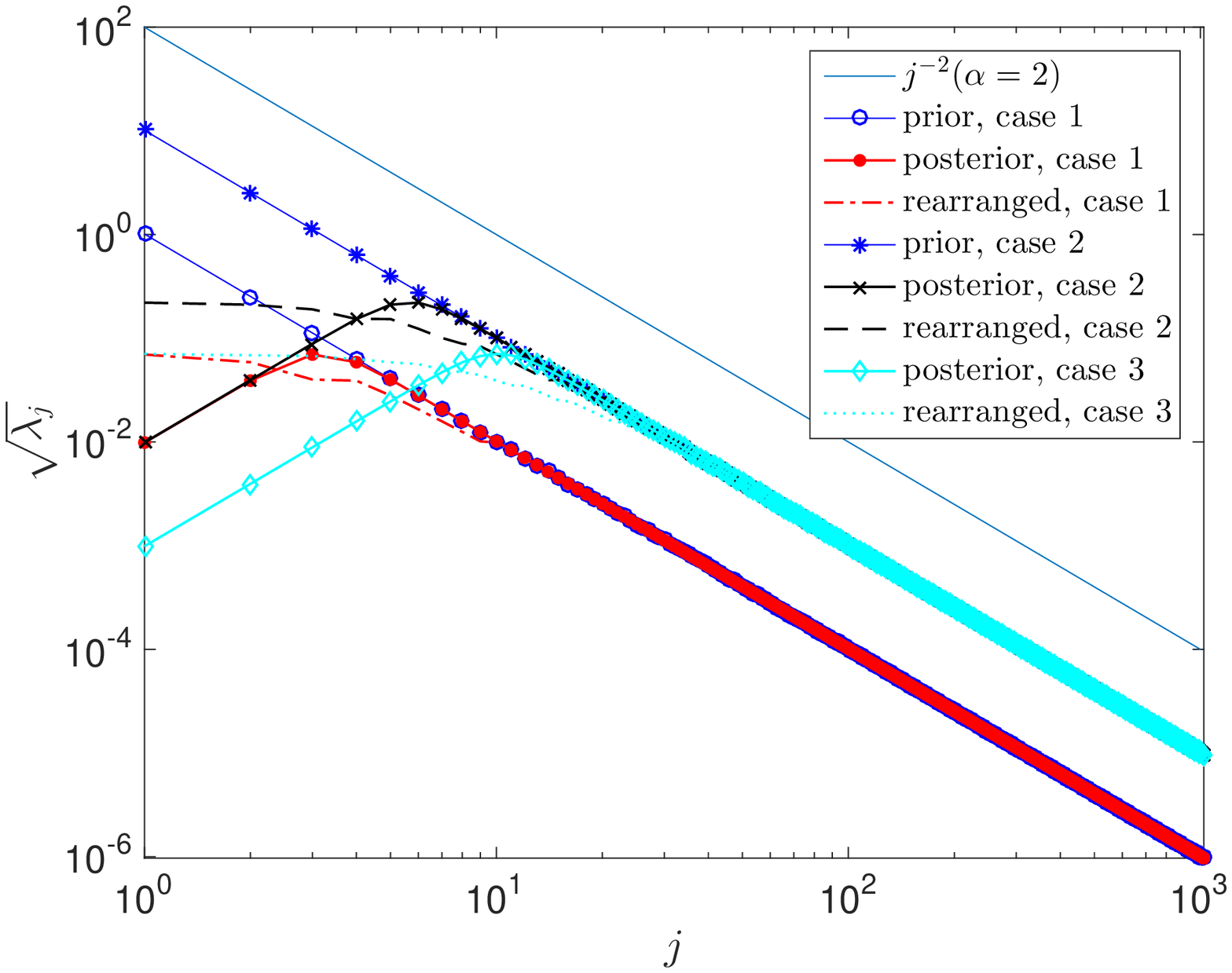}
\end{center}
\caption{Decay of the square root of the eigenvalues of the prior and the posterior covariance operators with different noise $\sigma$, variance parameter $\beta$, and smoothness parameter $\alpha$. Dashed line represents that of the posterior operator rearranged in descending order. Case 1, $(\beta, \sigma) = (10^{-1}, 10^{-2})$; case 2, $(\beta, \sigma) = (10^{-2}, 10^{-1})$; case 3, $(\beta, \sigma) = (10^{-2}, 10^{-2})$. $\alpha = 1$ (left), $\alpha = 2$ (right).}\label{fig:EigenDecayExam1}
\end{figure}

We can see that $\lambda_j^{1} < \lambda_j^{0}$ for all $j$, the uncertainty is reduced from the prior to the posterior in every dimension (eigendirection), and the reduction is more evident in the low dimensions, and less evident in high dimensions as $\lim_{j \to \infty} \lambda_j^{1} / \lambda_j^{0} = 1.$
Moreover, we can see that the smaller $\sigma$ is (smaller noise), the larger the reduction becomes, which can be interpreted as smaller noise provides more informative observation data. Furthermore, the smaller $\beta$ is, i.e., the larger the prior variance, the larger the reduction becomes. Finally, as the smoothness parameter $\alpha$ becomes larger, $\lambda_j^{-\alpha - 2}$ decays faster, so that the reduction effect is only evident in the first fewer dimensions.
Note that the eigenvalues of the posterior covariance operator do not decay monotonically. To use the sparse quadrature that is adaptively constructed from low dimension to high dimension, we rearrange these eigenvalues in decreasing order.

\subsubsection{Numerical test}
We set the physical domain $D = (0,1)$ and use piecewise linear finite element in a uniform mesh with mesh width $h = 2^{-10}$ for the discretization in physical space, which leads to 1023 parameter dimensions (note that $m(0) = m(1) = 0$). The eigenpairs of the Laplace operator with Dirichlet boundary conditions in $D$ read $\lambda_j = \pi^2 j^2$ and $\psi_j = \sin(j\pi x)$, $j \geq 1$. We write the Karhunen--Lo\`eve expansion of the parameter $m$ as follows
\beq\label{eq:1023KL}
m^{(i)} = m_i + \sum_{j =1}^{1023} \sqrt{\lambda_j^{i}} \xi_j \psi_j, \quad \xi_j \stackrel{i.i.d.}{\sim} \cN(0,1), \quad i = 0, 1, 
\eeq
where $i = 0$ and $i = 1 $ represent the expansion w.r.t. the prior and the posterior, respectively. 
To generate the observational data $y$, we first take a random sample $m_s$ from the prior distribution $\cN(m_0, \cC_0)$ (with $m_0 = 0$, $\beta = 5\times 10^{-2}$, and $\alpha = 1$) and we compute the solution $u_s$ of the forward problem \eqref{eq:Poisson} using $m_s$ as forcing term. Then we set $y = u_s + \eta$, where $\eta \sim \cN(0, \sigma^2 I)$ and  $\sigma = 10^{-2}$. 

In Fig. \ref{fig:anchoredDist} we show one and two-dimensional \emph{anchored marginals} from the prior and the posterior distribution (recall that due to linearity the posterior is also Gaussian). The anchored marginals are computed with respect to the first (i.e., dominant) 6 dimensions of the prior distribution. More explicitly, the one dimensional plots are obtained (up to a rescaling factor) by setting the $\xi_j$ in the expansion \eqref{eq:1023KL} to $0$  for all indexes $j$ except index $i$ ($i=1,\ldots,6$) which is allowed to vary in $\bbR$; the two dimensional plots are obtained by setting all the $\xi_j$ in the expansion \eqref{eq:1023KL} to $0$ expect the two indexes $k,l \in \{1, \dots, 6\}$.
Note how the posterior density is concentrated in a small region in the support of the prior density, especially for the first few dimensions. This concentration of the posterior may cause prior-based sparse quadrature to yield an inaccurate integration w.r.t. the posterior distribution; as shown in the middle-top part of the figure, Gauss--Hermite quadrature points w.r.t. the prior distribution may completely fail in detecting the posterior distribution. In contrast, quadrature points computed using the Hessian-based parametrization well capture the posterior distribution, as shown in the top-right part of the figure.

\begin{figure}[!htb]
\begin{center}
\includegraphics[scale=0.8]{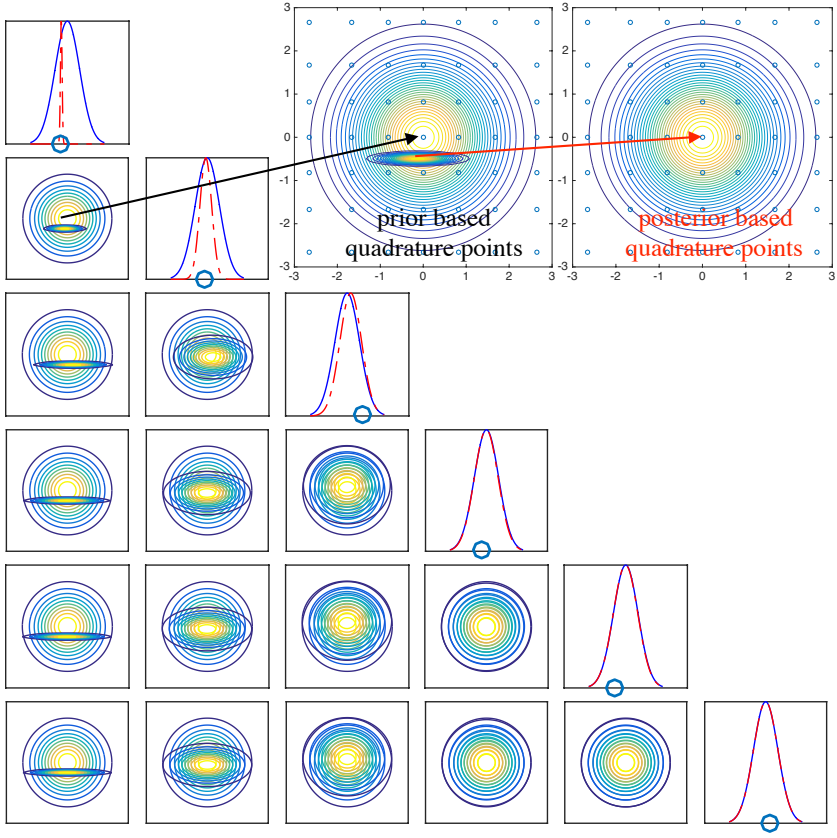}
\end{center}
\caption{Anchored marginal prior (blue solid line) and posterior (red dashed line) densities with the sample parameter value (in circle) in the first 6 dimensions as well as the contour of their mutual joint densities. The top-middle part is enlarged from the joint (both prior and posterior) density of dimension 1 and 2 with additional prior-based tensor-product Gauss--Hermite quadrature points (in circle). The top-right part is the corresponding transformed joint posterior density with Hessian-based quadrature points.
The posterior density is divided/normalized by its maximum value.}\label{fig:anchoredDist}
\end{figure}

To demonstrate the convergence properties of the Hessian-based adaptive sparse quadrature, we consider two quantities of interest. The first is
$$
Q_1(m) = e^{m(0.5)},
$$ 
which is a nonlinear function of parameter $m$ evaluated at $x = 0.5$, with all the derivatives bounded by 
$$
\bbE^\mu[|D_m^kQ_1(m^1)(\psi_{j_1}, \dots, \psi_{j_k})|^2] \leq \bbE^\mu[e^{2m(0.5)}] < \exp\big(2m_1(0.5) + 2|\lambda^1|_1\big)\;,
$$ 
which satisfies the assumption in Theorem \ref{thm:NrateBIP}. The second is
$$
Q_2(m) = (10\times u'(0.5))^2.
$$
which is the square of the first derivative of the solution $u$ at $x = 0.5$.
Note that $Q_2(m)$ \pnote{depends quadratically on $m$ (as the derivative of the solution of the forward problem $u'$ is bounded and linear with respect to $m$)}, align with Theorem \ref{thm:InverseConv}. Note that the delta functional $\delta_{0.5}(m) = m(0.5)$ acting the eigenfunctions $\sin(\pi j x)$ is uniformly bounded by $1$. We use the approximation $u'(0.5) \approx (u(0.5+h)-u(0.5-h))/(2h)$ for mesh size $h$. Both integrals $\bbE^{\mu^y}[Q_1]$ and $\bbE^{\mu^y}[Q_2]$ can be explicitly computed. More explicitly,
\begin{equation}\label{eq:Q1discrete}
\bbE^{\mu^y}[Q_1]  = \exp\left( \bsm_1^\top \bse_{0.5} + \sum_{j\geq 1} \lambda_j^1 (\bspsi_j^\top \bse_{0.5})^2 \right), 
\end{equation} 
where $\bsm_1$ is the coefficient vector of the MAP point $m_1$ in the finite element space, $\bse_{0.5} = (0, \dots, 0, 1, 0, \dots, 0)^\top$ is a vector of the same size as $\bsm_1$ with one at $x = 0.5$ and zero everywhere else, $\bspsi_j$ is the coefficient vector of the eigenfunction. For $Q_2$,
\begin{equation}\label{eq:Q2discrete}
\bbE^{\mu^y}[Q_2] = \bsd^\top_{0.5} \bbA^{-1} \bbM (\bsm_1 \bsm_1^\top + \bbC_1) \bbM \bbA^{-1} \bsd_{0.5},
\end{equation}
where $\bbC_1$ is the covariance matrix of $\bsm^1$, $\bsd_{0.5} = 10\times (0, \dots, 0, -1, 0, 1, 0, \dots, 0)^\top/(2h)$ corresponding to the derivative vector at $x = 0.5$, $\bbA$ and $\bbM$ are the stiffness and mass matrix corresponding to the Laplace operator $-\triangle$ and the identity operator $I$. 
%

\begin{figure}[!htb]
\begin{center}
\includegraphics[scale=0.33]{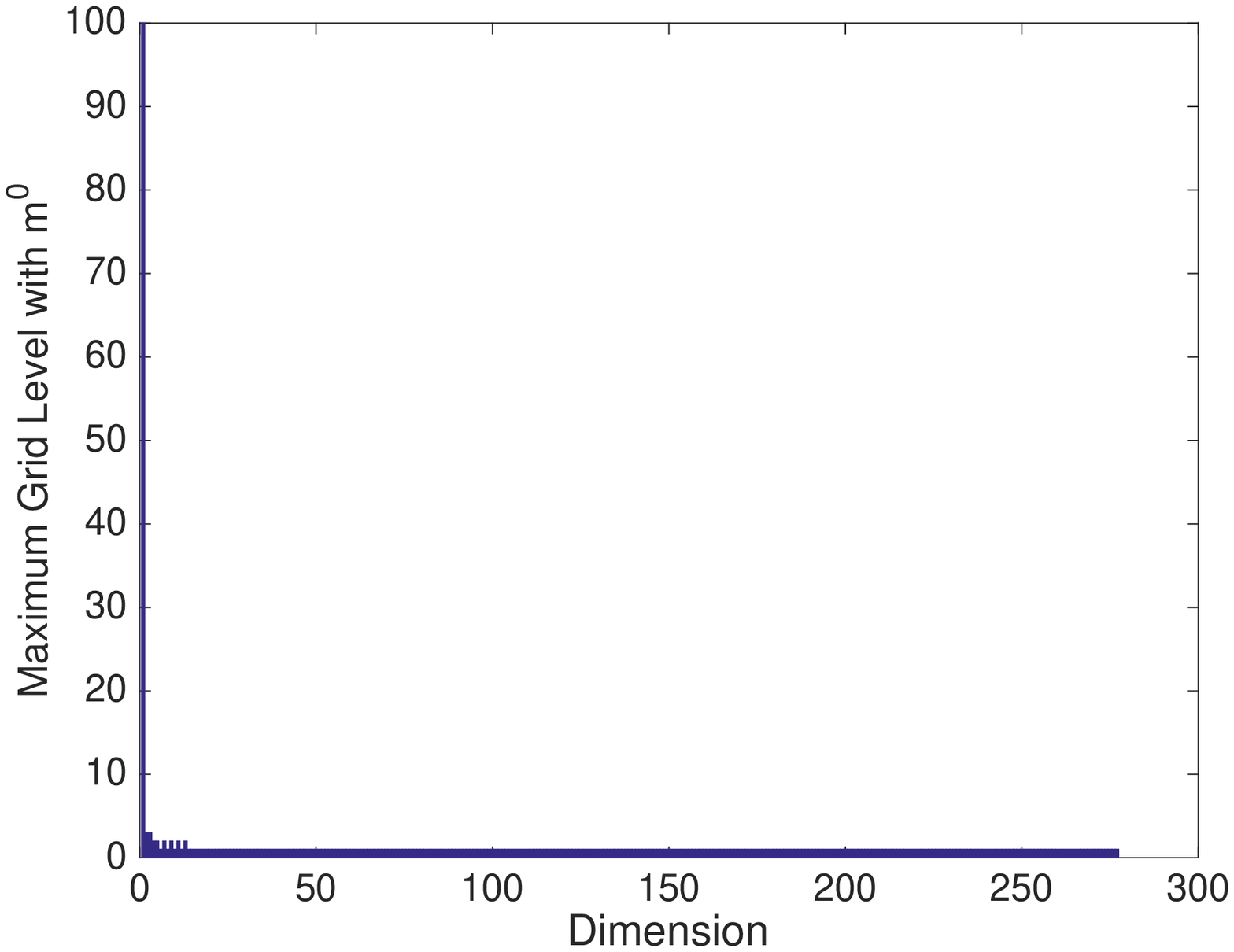}
\hspace*{0.2cm}
\includegraphics[scale=0.33]{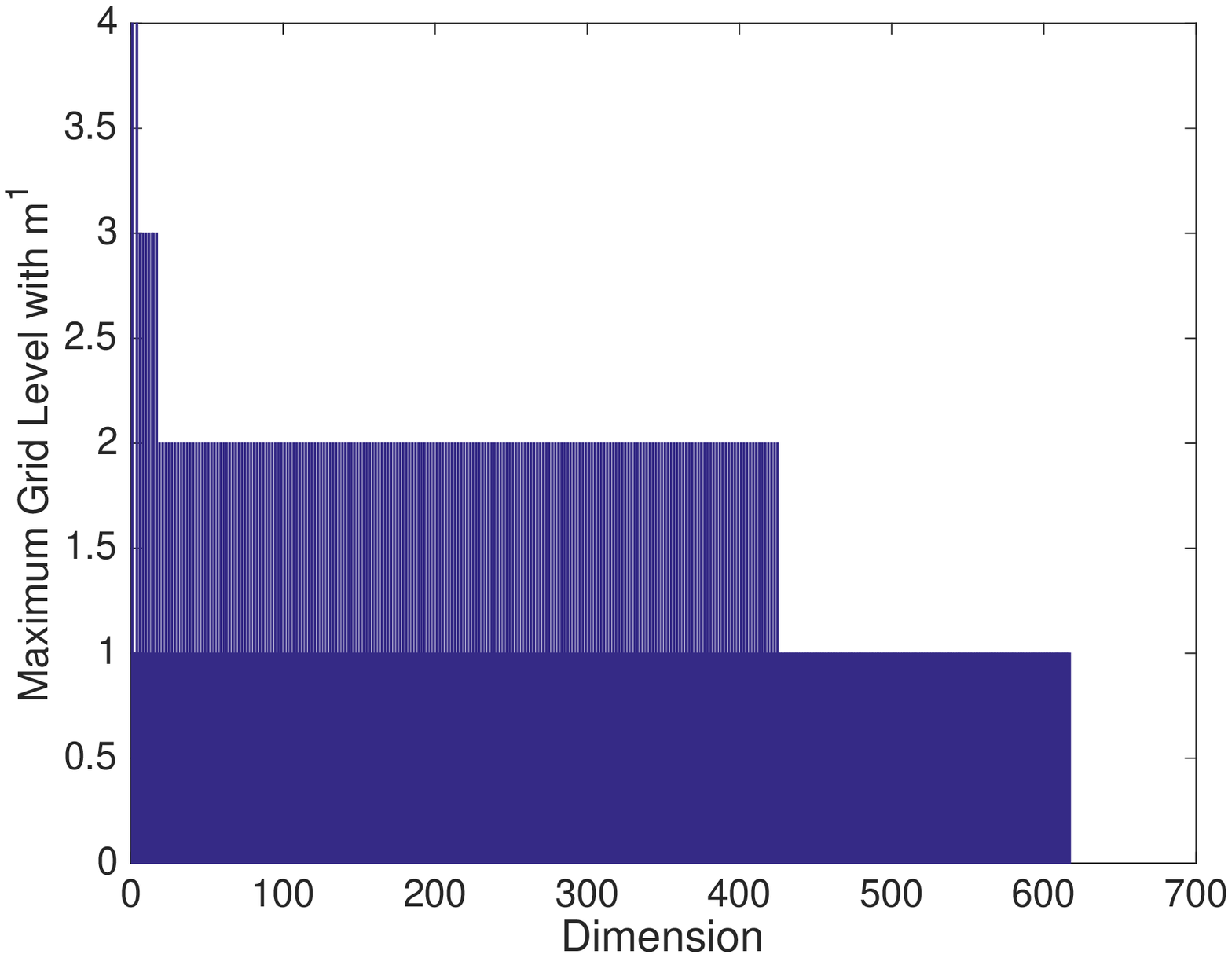}
\end{center}
\caption{The maximal level of the sparse quadrature constructed in each dimension by the prior-based (left) and the Hessian-based (right) quadratures for the integration of $Q_1$ with $\alpha = 1$.}\label{fig:sparselevellinear}
\end{figure}

We run the adaptive sparse quadrature by Algorithm \ref{alg:SparseQuad} for both the prior-based (with parametrization $m^0 \sim \cN(m_0, \cC_0)$) and the Hessian-based (with parametrization $m^1 \sim \cN(m_1, \cC_1)$) integration formulae \eqref{eq:PostIntegralPara} and \eqref{eq:PostInteGaussPara}, with both \emph{a priori} and \emph{a posteriori} construction methods. 
In this example, the concentration of the posterior in a small region of the support of the prior distribution (see Fig. \ref{fig:anchoredDist}) caused the prior-based adaptive sparse quadrature algorithm to terminate prematurely without adequate sampling of the posterior distribution and providing an erroneous integration result. To compute the quadrature errors for the two QoI we use the values computed in \eqref{eq:Q1discrete} and \eqref{eq:Q2discrete} as their references. 
Fig. \ref{fig:sparselevellinear} displays the maximum sparse grid level in each dimension constructed by the prior-based and Hessian-based sparse quadrature for the integration of $Q_1$ with $\alpha = 1$. We can observe that the prior-based sparse quadrature allocates most of the quadrature points in the first dimension due to the concentration of the posterior distribution in the domain of the prior distribution along the first dimension, while the Hessian-based sparse quadrature does not show such concentration. By increasing the number of quadrature points, more and more dimensions are activated. We can see that with $10^5$ points the first $617$ dimensions are activated by the Hessian-based sparse quadrature construction. We remark that \pnote{$Q_1$ does not depend on $j$ if $j$ is even since $\psi_j^1(0.5) = \sin( 0.5 \pi j) = 0$, so only the first grid level (with one quadrature point)} is activated in the even dimensions, see the right of Fig. \ref{fig:sparselevellinear}.

\begin{figure}[!htb]
\begin{center}
\includegraphics[scale=0.35]{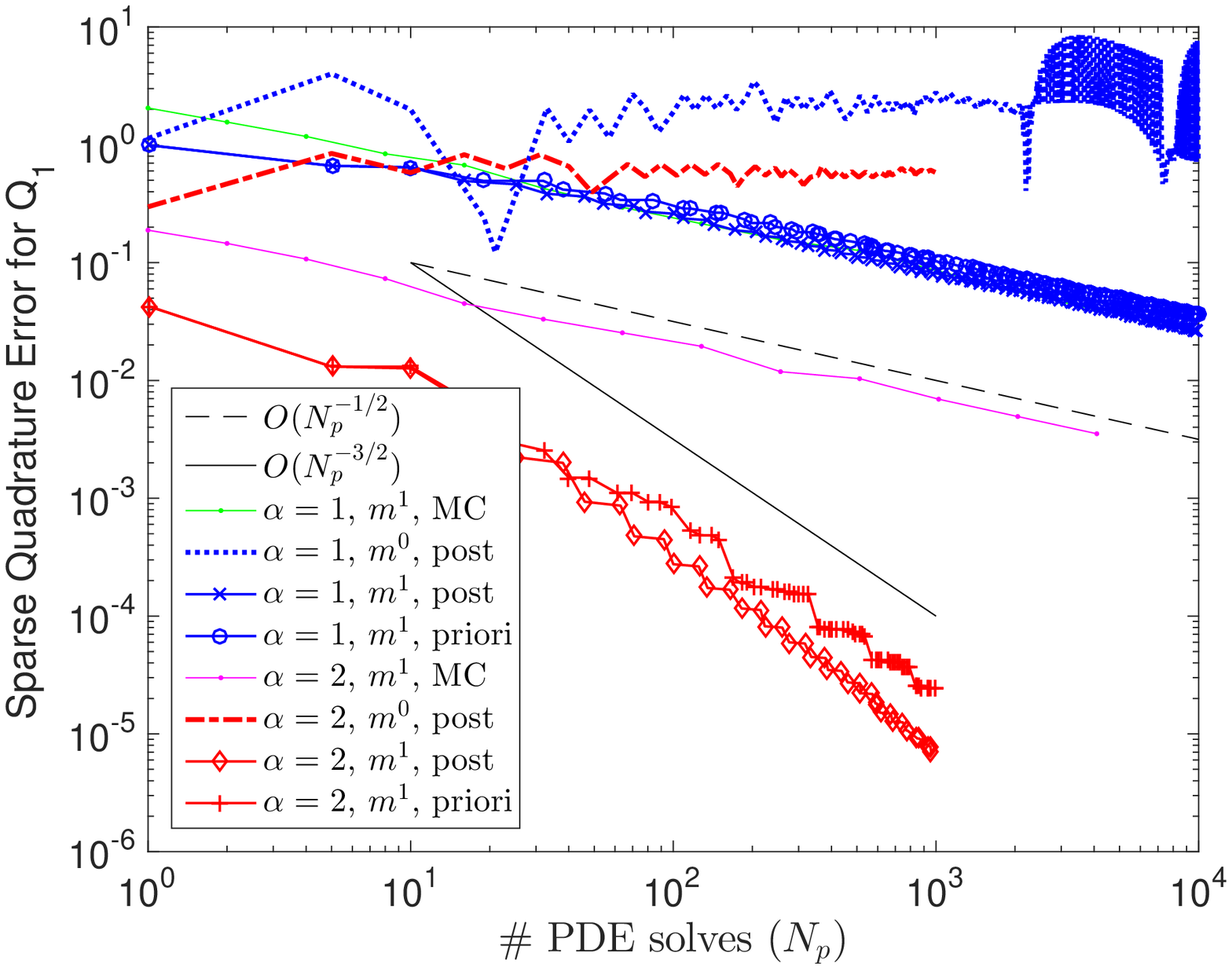}\hspace*{0.1cm}
\includegraphics[scale=0.35]{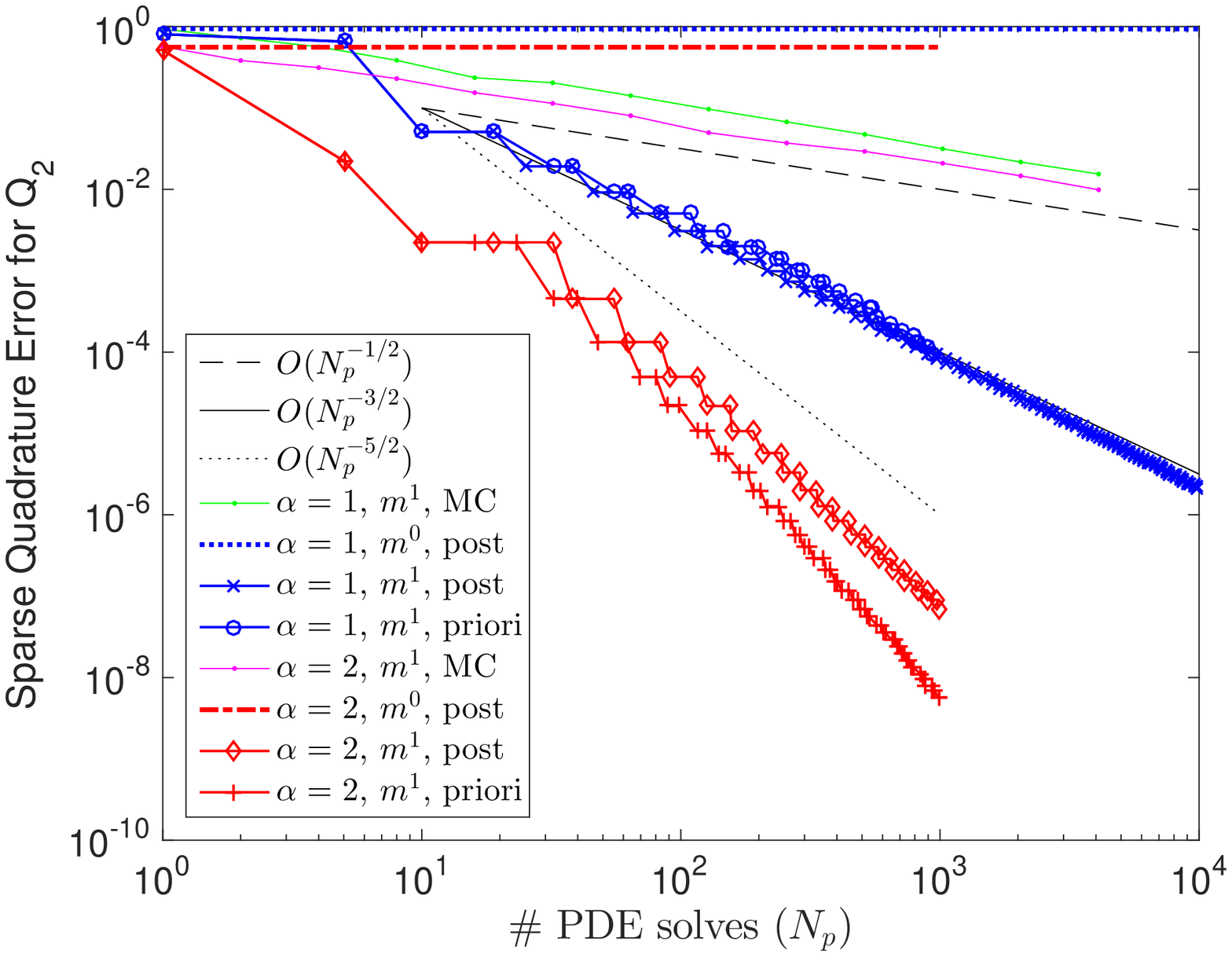}
\end{center}
\caption{Decay of the adaptive sparse quadrature errors w.r.t. the number of PDE solves by using the prior-based ($m^0$) and the Hessian-based ($m^1$) quadratures, with both \emph{a priori} and \emph{a posteriori} (post) construction methods, for the computation of the integrals of $Q_1$ (left) and $Q_2$ (right). The decay of the Hessian-based Monte Carlo (MC) quadrature errors (the average of 100 trials) are also shown for both integrals with both $\alpha = 1$ and $\alpha = 2$. 
}\label{fig:ConvRate}
\end{figure}

In Fig. \ref{fig:ConvRate} we plot the convergence of the sparse quadrature errors as a function of the number of quadrature points (or PDE solves). We can see that
the prior-based quadrature errors do not converge and remain large for both the two quantities of interest. In contrast, the Hessian-based quadrature errors by both the \emph{a priori} and the \emph{a posteriori} constructions converge with rate independent of the (active) parameter dimension. For $Q_1$, the \emph{a posteriori} construction leads to smaller quadrature errors than the \emph{a priori} construction, which is observed in most other cases, see \cite{chen2016adaptive}. However, the opposite is observed for $Q_2$ as the \emph{a priori} construction activates more dimensions than the \emph{a posteriori} construction for $Q_2$. Note that $Q_2$ is only quadratic with respect to $\xi_j$, $j \geq 1$, thus one level of Gauss--Hermite quadrature is exact for each dimension, so that more activated dimensions result in more accurate quadrature. 
The asymptotic convergence rate for $Q_1$ is $N_p^{-s}$ with $s = 1/2$ for $\alpha = 1$ and $s = 3/2$ for $\alpha = 2$, which is $s = \alpha/d  - 1/2$, larger than the bound $\alpha/d - 1$ in Theorem \ref{thm:NrateBIP} and \ref{thm:InverseConv}. For $Q_2$ we observe $N_p^{-s}$ with $s = 3/2$ for $\alpha = 1$
and $s = 5/2$ for $\alpha = 2$. The faster convergence for $Q_2$ is because of the higher regularity of the derivative of the solution $u'$ than that of the parameter $m$ itself in the sense of Assumption \ref{ass:DeriBound}. This also indicates that the convergence property of the sparse quadrature does not only depend on the parametrization but also on the regularity of the quantity of interest. We also plot the decay of the averaged Monte Carlo (MC) quadrature errors of 100 trials for the Hessian-based integration, from which we can observe the convergence rate $N_p^{-1/2}$ in all cases, even for the integrand with high regularity, e.g., $\alpha = 2$. 

\subsection{A nonlinear inverse problem}
\subsubsection{Problem setup}
We consider a nonlinear inverse problem where the forward model is an elliptic equation describing Darcy flow in a porous medium $D\subset \bbR^d \; (d = 1, 2, 3)$: find $u \in V := H^1_{\Gamma_D}(D)$ such that 
\beq\label{eq:Darcyflow}
-div(e^m \nabla u) = f, \quad \text{ in } D, 
\eeq  
with suitable Dirichlet boundary condition on the boundary $\Gamma_D$ and zero Neumann boundary condition on the rest of the boundary.
We assume that the diffusion coefficient is a lognormal random field, i.e., the parameter $m$ obeys a Gaussian distribution with mean $m_0$ and covariance $\cC_0$, i.e., $m \sim \cN(m_0, \cC_0)$. 
Moreover, we assume that measurements of $m$ are available at a few locations $x^l \in D$, $l = 1, \dots, L$. The mean of the parameter $m_0$ is obtained as a solution of the optimization problem 
\beq\label{eq:meanprior}
m_0 = \argmin_{m} \left\{ \frac{1}{2}\langle m, \cA m \rangle + \frac{\kappa}{2} \langle m-m_{true}, \cM (m-m_{\rm true})\rangle, \right\}
\eeq 
where $\langle \cdot, \cdot \rangle$ denotes the inner product in $L^2(D)$, $\cA = - \beta \triangle + \gamma I$, $m_{\rm true}$ is the true parameter field, $\kappa$ is a penalization parameter, and $\cM$ is a measurement operator defined as 
\beq
\cM = \sum_{l=1}^L \varepsilon_l I, \quad \text{ where }  \varepsilon_l  = \exp\left(-\frac{(x-x^l)^2}{2r^2}\right),
\eeq
with suitable radius $r > 0$. The covariance of $m$ is defined as 
\beq
\cC_0 = (\cA + \kappa \cM)^{-\alpha}, \quad \text{ with } \alpha > d/2.
\eeq 
We specify the parameter-to-observable map $\cG$ in \eqref{eq:obsdata} as 
\beq
\cG(m) = \cB\, u(m) \in \bbR^K,
\eeq
where $\cB = (b_1, \dots, b_K)^T$ is a vector of linear observation functionals $b_k \in V^*$, $k=1,\ldots, K$, where
\beq
\langle b_k, u \rangle = \int_D \exp\left(-\frac{(x-x^k_{obs})^2}{2r_{obs}^2}\right) u(x) dx.
\eeq 
Therefore, $\cG$ is nonlinear w.r.t. $m$ as $u$ is so, leading to a nonlinear inverse problem.
We specify the covariance matrix $\Gamma_{\text{noise}} = \sigma^2 I$ for the noise $\eta$ in \eqref{eq:obsdata}, where $I$ is the identity matrix of size $K \times K$.
To compute the MAP point, we apply the inexact Newton--conjugate gradient method \cite{bui2013computational}, i.e., using Newton iteration method for the nonlinear optimization w.r.t. the parameter $m$ and conjugate gradient method for the solution of the incremental parameter $\hat{m}$ at each Newton step, for which we need to compute the gradient and Hessian of the cost functional $\cJ$ (defined in \eqref{eq:cost}) constrained by problem \eqref{eq:Darcyflow}.
We introduce 
the Lagrangian functional 
\beq
\cL(u,m,p) = \cJ(m) + \langle e^m\nabla u, \nabla p\rangle - \langle f, p \rangle.
\eeq
where $p$ is the Lagrangian multiplier. Then the gradient of $\cJ$ can be computed as 
\beq
D_m \cJ(m) (\tilde{m})= \partial_m \cL(u,m,p) (\tilde{m}) = \langle m-m_0, \tilde{m} \rangle_{\cC_0} + \langle \tilde{m} e^m \nabla u, \nabla p \rangle, \; \forall \tilde{m} \in X,
\eeq
where $u$ and $p$ are the solutions of the state and adjoint equations obtained by the first order variation of the Lagrangian functional 
\beq
\begin{split}
\langle e^m \nabla u, \nabla \tilde{p} \rangle &= \langle f,\tilde{p}\rangle,  \quad \forall \tilde{p} \in V,\\
\langle e^m \nabla \tilde{u}, \nabla p \rangle & = \langle \cB^* \Gamma_{\text{noise}}^{-1} (y - \cB u), \tilde{u}\rangle, \quad \forall \tilde{u} \in V;
\end{split}
\eeq
and the Hessian of $\cJ$ evaluated at $\hat{m} \in X$ can be computed as 
\beq
\begin{split}
& D^2_m \cJ(m)(\tilde{m}, \hat{m})  = \partial^2_m \cL(u,m,p) (\tilde{m}, \hat{m}) \\
& = \langle \tilde{m} e^m \nabla \hat{u}, \nabla p \rangle + \langle \tilde{m} e^m \nabla u, \nabla \hat{p} \rangle +  \langle \hat{m}, \tilde{m} \rangle_{\cC_0} + \langle \tilde{m} \hat{m} e^m \nabla u, \nabla p \rangle, \; \forall \tilde{m} \in X,
\end{split}
\eeq
where $\hat{u}$ and $\hat{p}$ are the solutions of the incremental state and adjoint equations obtained by the second order variation of the Lagrangian functional
\beq
\begin{split}
\langle e^m \nabla \hat{u}, \nabla \tilde{p} \rangle &= - \langle \hat{m} e^m \nabla u, \nabla \tilde{p} \rangle, \quad \forall \tilde{ p} \in V,\\
\langle e^m \nabla \tilde{u}, \nabla \hat{p} \rangle &= - \langle \hat{m} e^m \nabla \tilde{u}, \nabla p \rangle - \langle \cB^* \Gamma_{\text{noise}}^{-1} \cB \hat{u}, \tilde{u} \rangle, \quad \forall \tilde{u} \in V.
\end{split}
\eeq

\subsubsection{Numerical test} We consider the physical domain $D = (0,1)$ with Dirichlet boundary condition $u(0) = 1$ and $u(1) = 0$, and specify the parameters for the prior distribution as 
$\alpha = 1, \beta = 2, \gamma = 1, \kappa = 10^3$, and the measurement locations $x^l = (l-1) 2^{-2}$ with $l = 1, \dots, 5$.  For the observation noise we set $\sigma = 5\times 10^{-2}$ at the observation locations $x_{obs}^k = (k-1) 2^{-6}$ with $k = 1, \dots 65$.  We use piecewise linear finite element for the discretization at a uniform mesh of width $h = 2^{-10}$, thus resulting in a $1025$-dimensional problem. We set the radius for the measurement and the observation operators $r = r_{obs} = h$. 

\begin{figure}[!htb]
\begin{center}
\includegraphics[scale=0.5]{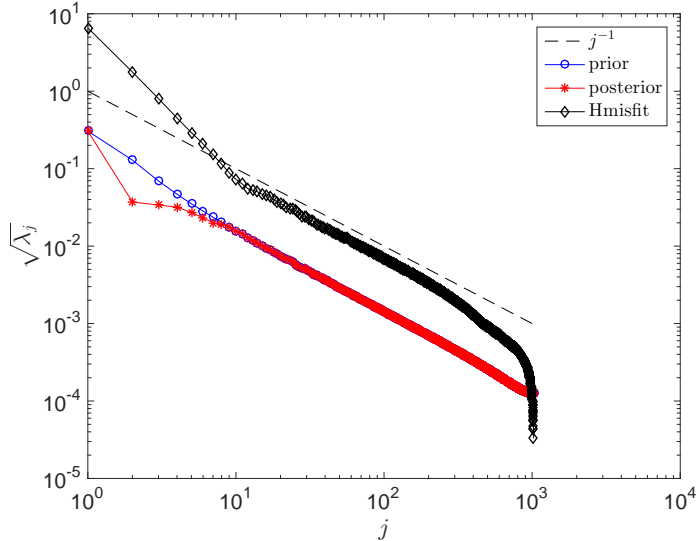}
\end{center}
\caption{
Decay of the square root of the eigenvalues of the prior covariance in \eqref{eq:priorEigenvalue}, the Gauss-posterior covariance in \eqref{eq:postEigenvalue}, and the prior-preconditioned Hessian misfit in \eqref{eq:HmisfitEigenvalue}. 
}\label{fig:sampleeigen}
\end{figure}

To generate the prior mean $m_0$ and the observation vector $y$, we first sample the \emph{true} parameter $m_{\rm true}$ from the Gaussian distribution $\cN(0, \cA^{-1})$. Then we let $m_0$ be the solution of the quadratic optimization problem \eqref{eq:meanprior}, and we set $y = \cB\,u_{\rm true} + \eta$, where $u_{\rm true}$ is the solution of the forward problem \eqref{eq:Darcyflow} with $m=m_{\rm true}$ and $\eta \sim \cN({\bf 0}, \Gamma_{\rm noise})$ is a random noise vector.

In Fig. \ref{fig:sampleeigen}, we plot the square root of the eigenvalues $(\sqrt{\lambda_j})_{j\geq 1}$ of the prior covariance $\cC_0$ in \eqref{eq:priorEigenvalue}, the Gaussian posterior covariance $\cC_1$ in \eqref{eq:postEigenvalue}, as well as the prior-preconditioned Hessian misfit term $ H_{\text{misfit}}$ in \eqref{eq:HmisfitEigenvalue} for completeness. The former two converge with an asymptotic rate of $O(j^{-1})$ for the sparsity parameter $\alpha = 1$. The latter decay very fast for the first few dimensions which are the most informed by the observation data.
We can observe a reduction of the variance from the prior to the Gaussian approximate of the posterior in these dimensions. Note that, for this particular problem with $f=0$ in \eqref{eq:Darcyflow}, the data do not inform the constant component of $m$; for this reason we do not observe reduction of the variance for the first dimension of the prior distribution whose corresponding eigenvector is nearly constant in space.


\begin{figure}[!htb]
\begin{center}
\includegraphics[scale=0.75]{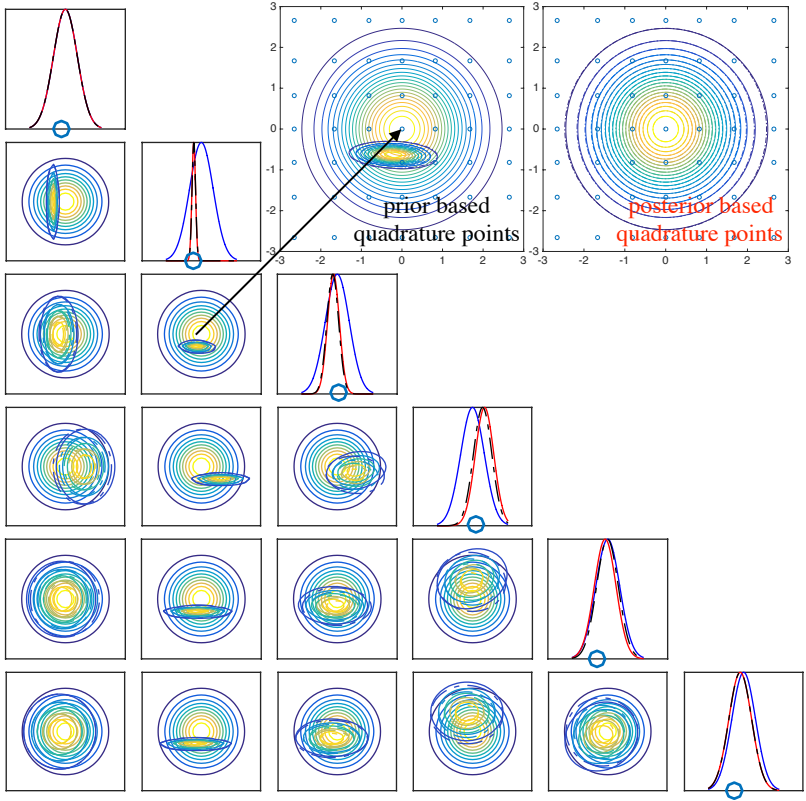}
\end{center}
\caption{Anchored marginal prior (blue solid line), posterior (red solid line), and the Gaussian approximate of the posterior (black dashed line) densities with the sample parameter value (in circle) in the first 6 dimensions as well as the contour of their mutual joint densities (dashed line for Gaussian posterior). The top-middle part is enlarged from the joint densities of dimension 2 and 3 with additional prior-based tensor-product Gauss--Hermite quadrature points (in circle). The top-right part is the joint posterior and the joint Gaussian approximate of the posterior densities (note that they are very close to each other) of dimension 2 and 3 with Hessian-based quadrature points.}\label{fig:anchoredDistPrior}
\end{figure}

Fig. \ref{fig:anchoredDistPrior} displays the anchored marginal densities of the prior $\rho^0(m) = \exp(-\frac{1}{2}||m - m_0||^2_{\cC_0})$, the posterior $\rho^{post} = \exp(-\frac{1}{2}||y - \cG(m)||_{\Gamma_{noise}}^2)\rho^0(m)$, and the Gaussian approximate of the posterior $\rho^1 = \exp(-\frac{1}{2}||m - m_1||^2_{\cC_1})$,
as well as their mutual joint density in the first 6 dimension of the KL expansion of the parameter w.r.t. its prior distribution. Note that all the densities are divided (normalized) by their maximal value. The data are particularly informative in the second dimension, leading to a much smaller variance of the posterior distribution with respect to the prior distribution, as shown by how  the posterior density concentrates in a small region. The prior-based quadrature points fail to capture the local shape of the posterior density, thus leading to inefficient sparse quadrature. On the contrary, the posterior and the Gaussian approximate of the posterior densities are close to each other in each of the 6 leading dimensions in the parametrization of the prior distribution. Moreover, as shown in the top-right part of Fig. \ref{fig:anchoredDistPrior}, the Hessian-based quadrature points effectively capture the shape of the posterior distribution.


We run Algorithm \ref{alg:SparseQuad} to compute the integration of the QoI 
$$
Q = u(0.5)
$$ 
using both the prior-based quadrature \eqref{eq:PriorIntegral} and the Hessian-based quadrature \eqref{eq:PostIntegraldu0du1}. 
We need to compute the integrals with the two integrands $Q_1$ and $Q_2$ given by
$$
Q_1 = \exp(-\Phi(m)) \text{ and } Q_2 = Q \exp(-\Phi(m))
$$ for the prior-based quadrature \eqref{eq:PriorIntegral} and by
$$Q_1 = \exp(-\cJ_1(m)) \text{ and } Q_2 = Q \exp(-\cJ_1(m))$$ for the Hessian-based quadrature \eqref{eq:PostIntegraldu0du1}. We remark that due to the complexity of the posterior density and the nonlinear dependence of the solution $u$ on the parameter $m$, we can not rigorously verify the assumption for the two integrands in Theorem \ref{thm:NrateBIP}. The decay of the adaptive sparse quadrature errors is shown in Fig. \ref{fig:convnonlinear}, where the reference values to compute the quadrature errors are taken as the adaptive sparse grid quadrature results with $10^4$ points for the prior-based quadrature and $10^5$ points for the Hessian-based quadrature, respectively.
The error of prior-based quadrature rule does not decay and remains oscillating for both $Q_1$ and $Q_2$; on the contrary, the error of the Hessian-based quadrature decays with an asymptotic rate of $O(N_p^{-1})$ for $Q_1$ and $Q_2$. We remark that in Fig. \ref{fig:convnonlinear} we only show the quadrature errors of the sparse quadrature obtained by the \emph{a posteriori} construction in Algorithm \ref{alg:SparseQuad}, since the \emph{a priori} construction based only on the coefficient $b_\bsnu$ in \eqref{eq:bnu} failed to effectively detect the complicated shape of two QoIs in the parameter space and resulted in too much oscillation of the quadrature errors. This numerical test suggests that \emph{a posteriori} construction should be used for nonlinear inverse problems as the posterior distribution/density function could have rather complicated shape. Moreover, we point out that the Gaussian approximate of the posterior distribution is quite close to the true posterior distribution as shown in Fig. \ref{fig:anchoredDistPrior}. If the Gaussian approximate is very poor, which occurs for informative observation data with very small noise or high non-linearity of the forward model, we caution that the Hessian-based allocation of the quadrature points may not capture the posterior distribution so that the sparse quadrature could fail to approximate the required integrals. How to tackle this difficulty is an ongoing research.

\begin{figure}[!htb]
\begin{center}
\includegraphics[scale=0.33]{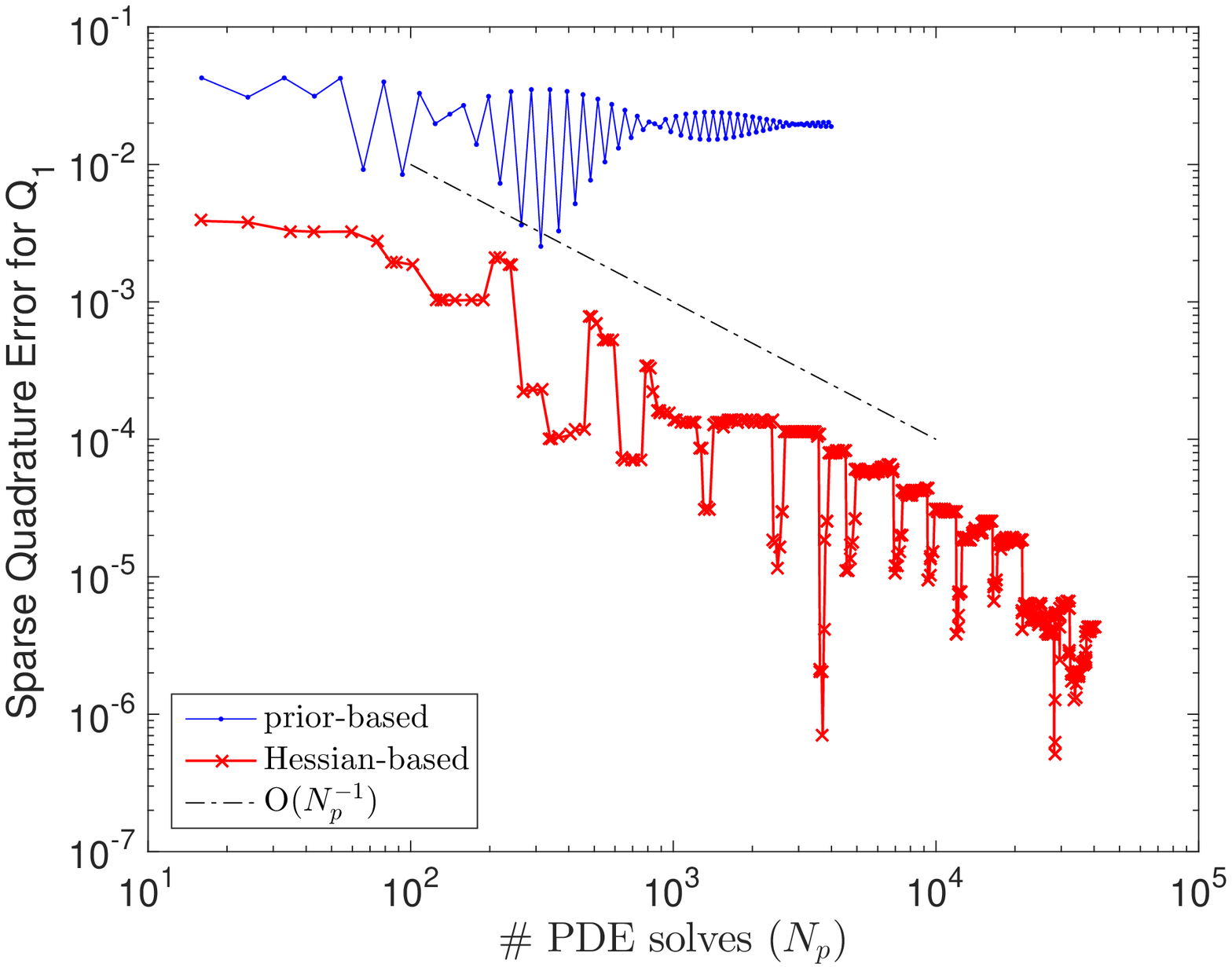}
\hspace*{0.2cm}
\includegraphics[scale=0.33]{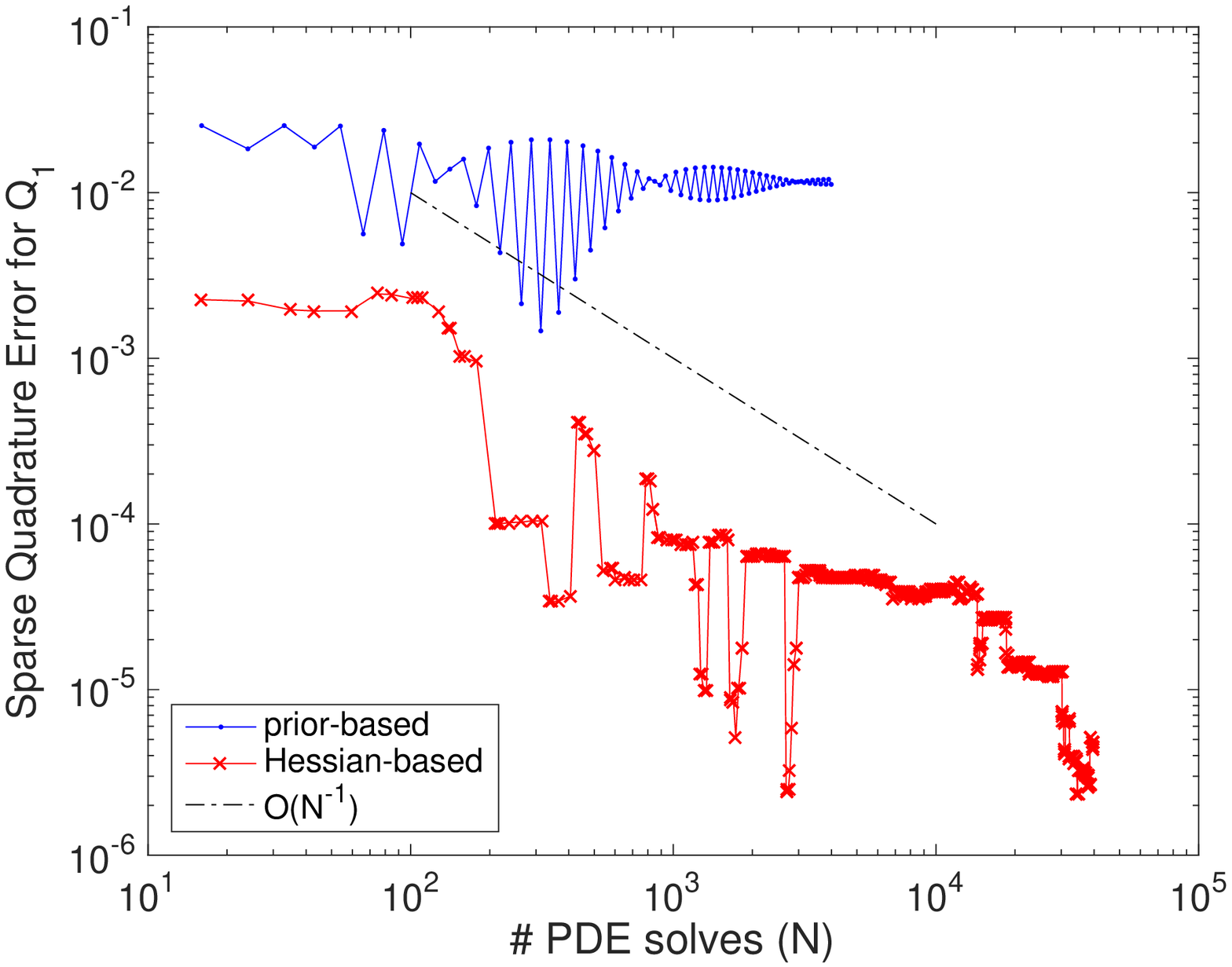}
\end{center}
\caption{Decay of the sparse quadrature errors for the $Q_1$ (left) and $Q_2$ (right), using both the prior-based quadrature \eqref{eq:PriorIntegral} and the Hessian-based quadrature \eqref{eq:PostIntegraldu0du1} with \emph{a posteriori} construction. 
}
\label{fig:convnonlinear}
\end{figure}

\section{Conclusions}
\label{sec:conclusion}
In this work, we proposed and analyzed a Hessian-based adaptive sparse
quadrature to compute infinite-dimensional integrals with respect
to the posterior distribution in the context of
infinite-dimensional Bayesian inverse problems. Dimension-independent
convergence  of the proposed method was theoretically
established for linear inverse problems and demonstrated numerically
for both linear and nonlinear inverse problems.  
The convergence of the adaptive sparse quadrature is faster than that
of the Monte Carlo or Markov-chain Monte Carlo method, especially for
sparser problems, i.e., for problems in which  the eigenvalues of the
covariance operator decay faster and the
parameter-to-quantity-of-interest map is smoother. We illustrated the
concentration of the posterior distribution in a small region of the
support of the prior distribution for both linear and nonlinear
inverse problems. We showed that a prior-based parametrization
quadrature may fail to capture the shape of the posterior distribution
and result in inaccurate values of integrals with respect to the posterior
distribution, while the Hessian-based parametrization effectively
allocates quadrature points in regions of high density of the
posterior distribution and leads to correct values of the integrals.
Further investigation and application of our proposed method is
ongoing for Bayesian inverse problems with very small observation
noise and highly nonlinear forward problems. These lead to more complicated
and localized posterior distributions that cannot be effectively
captured by the Hessian-based transformation, but may be with a
measure transport \cite{marzouk2016introduction}. 

\vspace*{0.5cm}

\appendix
\section{A re-parametric formulation}
\label{subsec:ParaFormulation}
In this section, we briefly review the re-parametric formulation presented in \cite{schillings2014scaling}, where $m$ is first pre-parametrized according to the prior measure $\mu_0 = \cN(m_0,\cC_0)$ and then re-parametrized using Hessian-based directions.
We then compare our development in Section \ref{subsec:HessianPara} with such re-parametric formulation both in terms of computational efficiency and accuracy for the solution of Bayesian inverse problems constrained by large-scale PDE models. To start, we denote the eigenpairs of the covariance operator $\cC_0$ as $(\lambda_j^0,\psi_j^0)_{j\geq 1}$ and denote 
\beq
\Psi_0 = (\psi^0_1, \psi^0_2, \dots, ), \quad \Lambda_0 = \text{diag}(\lambda_1^0, \lambda_2^0, \dots, )\;,
\eeq
so that the parameter $m$ can be expressed by the Karhunen--Lo\`eve expansion as 
\beq\label{eq:compactm}
m(\bsxi) = m_0 + \sum_{j \geq 1} \sqrt{\lambda_j^0} \psi_j^0 \xi_j \equiv m_0 + \Psi_0 \Lambda_0^{1/2} \bsxi\;.
\eeq
The prior measure of $m$ can be fully represented by that of $\bsxi$, which is Gaussian $\cN(0,I)$, where $0 = (0,0, \dots,)$ and $I = \text{diag}(1,1, \dots, )$. The MAP point of the posterior measure is a parameter $\bsxi_1 \in \Xi$ that solves the following minimization problem 
\beq\label{eq:projectedMinimizationMAP}
\min_{\bsxi \in \Xi} \cJ(m(\bsxi))\;,
\eeq
where the $\bsxi$-parametric cost functional is given explicitly by \eqref{eq:cost} and \eqref{eq:compactm}
\beq
\cJ(m(\bsxi)) = \frac{1}{2}||y - \cG(m(\bsxi))||_{\Gamma_{\text{noise}}}^2 + \frac{1}{2} \bsxi^\top \bsxi\;.
\eeq
Note that we have used the eigendecomposition $\cC_0^{-1} = \Psi_0 \Lambda_0^{-1} \Psi_0^\top$ in the second term.
We remark that the solution $\bsxi_1 \in \Xi$ implies that $m_1 \in E$, the Cameron--Martin space associated with the prior measure $\cN(m_0, \cC_0)$. The Hessian of the $\bsxi$-parametric cost functional $\cJ$ evaluated at the MAP point is given by  
\beq
H_{\text{MAP}}^\bsxi = D_\bsxi^2 \cJ(m(\bsxi))|_{\bsxi = \bsxi_1} = H_{\text{misfit}}^\bsxi  +  I\;,
\eeq
where Hessian of the misfit term is obtained by applying the chain rule for derivative
\beq
H_{\text{misfit}}^\bsxi  = \left(\frac{Dm}{D\xi}\right)^\top \left(D_{m}^2 \Phi(m)\right) \frac{Dm}{D\xi}\Big|_{\bsxi = \bsxi_1} =  \Lambda_0^{1/2} \Psi_0^\top H_{\text{misfit}} \Psi_0 \Lambda_0^{1/2}\;.
\eeq
Consequently, the covariance of the Gaussian measure $\cN({\bsxi_1}, \cC_1^\bsxi)$  is given by 
\beq
\cC_1^\bsxi = \left(H_{\text{MAP}}^\bsxi\right)^{-1} = \left(
H_{\text{misfit}}^{\bsxi} + I
\right)^{-1} = \Lambda_0^{-1/2} \Psi_0^\top \cC_1 \Psi_0 \Lambda_0^{-1/2}\;.
\eeq
Since the covariance operator $\cC_1$ can be decomposed as $\cC_1 = \Psi_1 \Lambda_1 \Psi_1^\top$, we have 
\beq
\cC_1^\bsxi  =  \left(\Lambda_0^{-1/2} \Psi_0^\top \Psi_1 \Lambda_1^{1/2}\right) \left(\Lambda_1^{1/2} \Psi_1^\top \Psi_0 \Lambda_0^{-1/2}\right) =: L L^\top\;,
\eeq
where by the compact and self-adjoint property of $\cC_1^\bsxi$, i.e. there exist the eigenpairs of $\cC_1^\bsxi$, denoted as $(\Lambda_1^\bsxi, \Psi_1^\bsxi)$, and there holds $\cC_1^\bsxi = \Psi_1^\bsxi \Lambda_1^\bsxi (\Psi_1^\bsxi)^\top$, so that we can identify $L = \Psi_1^\bsxi (\Lambda_1^\bsxi)^{1/2}$, so that $L^{-1} = (\Lambda_1^{\bsxi})^{-1/2} (\Psi_1^\bsxi)^\top$.
We make the change of variables 
by shifting ${\bsxi_1}$, rotating $(\Psi_1^\bsxi)^\top$ and rescaling $(\Lambda_1^\bsxi)^{-1/2}$ for $\bsxi$, i.e.
\beq
\bszeta = L^{-1} (\bsxi - {\bsxi_1})\;, 
\eeq
for which we have $\bbE^\bsrho[\bszeta] = 0$ and $\bbE^\bsrho[\bszeta \bszeta^\top]$ = I, so that $\bszeta \in \cN(0,I)$; moreover, we have 
\beq
 \bsxi = L \bszeta + {\bsxi_1}\;\;.
\eeq
Replacing this $\bsxi$ as a function of $\bszeta$ in \eqref{eq:compactm}, we obtain 
\beq\label{eq:repam}
m(\bsxi(\bszeta)) =  m_0 +  \Psi_0 \Lambda_0^{1/2} {\bsxi_1}+ \Psi_0 \Lambda_0^{1/2} L \bszeta = m_1 + \Psi_1 \Lambda_1^{1/2} \bszeta\;,
\eeq
where $m_1 = m(\bsxi_1)$. Denoting $m^1(\bszeta)= m(\bsxi(\bszeta))$, we obtain the same parametrization of the parameter $m$ as in \eqref{eq:postm1} derived in Section \ref{subsec:HessianPara}. 

Our Hessian-based parametrization and the above re-parametric formulation are therefore equivalent in the idealized case of no truncation in the Karhunen--Lo\`eve expansion of $m$ with respect to the prior. However, in practice, only a finite number of eigenmodes in such expansion can be computed.
In \cite{schillings2014scaling}, the authors first perform a finite-dimensional truncation of the pre-parametric parameter \eqref{eq:compactm}, then solve the minimization problem for the MAP point \eqref{eq:projectedMinimizationMAP} in the subspace spanned by the dominant eigenmodes of the prior, and finally compute the Hessian and its eigenmodes projected onto such subspace.
This may lead to discretization bias with respect to the stochastic parameter, since directions that are well informed by the data may not be correctly captured by the subspace of the dominant eigenmodes of the prior. In contrast, our formulation does not involve truncation of the parameter according to the prior measure and, therefore, allows to capture all the directions --- resolved by the spatial discretization of the parameter field --- that are informed by the data.

\section{Computation of the parametrization}
\label{subsec:comHesspara}
Let $X_h$ denote a finite-dimensional subspace of $X$ with $N_h$ basis functions $\{\phi_h^n\}_{n=1}^{N_h}$, for instance finite element space or spectral space, where $h$ represents the mesh size or the reciprocal of the spectral order, then we can approximate the parameter $m$ by 
\beq
m_h = \sum_{n=1}^{N_h} m_n \phi_h^n,
\eeq
where $\bsm = (m_1, \dots, m_{N_h})^T$ is the vector of the coefficients in the basis function expansion. Let $\bbM \in \bbR^{N_h\times N_h}$ denote the mass matrix with $\bbM_{ij} = \langle \phi_h^j, \phi_h^i\rangle, \;, i,j = 1, \dots, N_h$. Let $\bbA \in \bbR^{N_h\times N_h}$ denote the matrix corresponding to the elliptic differential operator \eqref{eq:elldiffoper}, given by 
\beq
\bbA_{ij} = \langle \phi_h^i, \cA \phi_h^j \rangle = \beta \langle \Theta \nabla\phi_h^j, \nabla \phi_h^i \rangle + \gamma \langle \phi_h^j, \phi_h^i \rangle, \quad i,j = 1, \dots, N_h.
\eeq
Then we can compute the following inner products as 
\beq
\langle m_1, \cA^\alpha m_2 \rangle = \bsm_1^\top \bbA_\alpha \bsm_2, \text{ and } \langle m_1, \cA^{-\alpha} m_2 \rangle = \bsm_1^\top \bbM\bbA_\alpha^{-1} \bbM \bsm_2
\eeq
where $\bbA_\alpha = (\bbA\bbM^{-1})^{\alpha-1}\bbA$, $\alpha = 1, 2, \dots $. The discrete eigenpairs of the prior covariance operator $\cC_0$ are obtained by solving the generalized eigenvalue problem 
\beq\label{eq:priorEigenvalue}
\bbM\bbA_\alpha^{-1} \bbM \bspsi_j^0 = \lambda_j^0 \bbM \bspsi_j^0, \quad j \geq 1,
\eeq
for which we can employ a Lanczos method \cite{calvetti1994implicitly} or a randomized method \cite{halko2011finding} that involves $O(\alpha J)$ elliptic PDE solves for computing $J$ eigenpairs. Both methods get rid of direct assembling of the matrix $\bbM \bbA_\alpha^{-1}\bbM$, which could be very expensive for large-scale problems, i.e., $N_h$ large. 

To compute the discrete eigenpairs of the posterior covariance operator $\cC_1$ in \eqref{eq:PostCovariance}, we first need to compute the MAP point as the solution of the optimization problem \eqref{eq:MAP}, for which we apply a Newton-conjugate gradient method~\cite{bui2013computational, hippylib2016}, where for the solution of the increment at each Newton step, we use conjugate gradient method. At the MAP point, evaluation of the Hessian misfit term $H_{\text{misfit}}$ in a given direction can be realized by solving one forward PDE and three additional linear PDEs corresponding to the adjoint, the incremental state and adjoint problems as illustrated in Section \ref{sec:Numerical} for a specific model. Then the eigenpairs of the covariance operator $\cC_1$ in \eqref{eq:PostCovariance} can be obtained by solving two generalized eigenvalue problems. 
By $\bbH_{\text{misfit}}$ we (formally) denote the matrix version of the Hessian misfit term $H_{\text{misfit}}$. We solve the first generalized eigenvalue problem with $\lambda_1 \geq \lambda_2 \geq \cdots $
\beq\label{eq:HmisfitEigenvalue}
\bbH_{\text{misfit}} \bspsi_j = \lambda_j \bbA_\alpha \bspsi_j, \quad j \geq 1, 
\eeq
by Lanczos or randomized method, which involves $O(J)$ PDE solves for $J$ eigenpairs. Then by algebraic manipulation we can write the posterior covariance matrix as \cite{spantini2015optimal}
\beq
(\bbH_{\text{misfit}} + \bbA_\alpha)^{-1} = \bbA^{-1}_\alpha - \Psi_J D_J \Psi_J^\top + O\left(\sum_{j> J} \frac{\lambda_j}{1+\lambda_j}\right),
\eeq
where $D_J = \text{diag}(\lambda_1/(1+\lambda_1), \dots, \lambda_J/(1+\lambda_J))$, with $J$ determined according to certain criteria, e.g., $\lambda_J$ is small enough, and $\Psi_J = (\bspsi_1, \dots, \bspsi_J)$. The second generalized eigenvalue problem is also solved by Lanczos or randomized method 
\beq\label{eq:postEigenvalue}
\bbM (\bbA^{-1}_\alpha - \Psi_J D_J \Psi_J^\top ) \bbM \bspsi_j^1 = \lambda_j^1 \bbM \bspsi_j^1, \quad j \geq 1,
\eeq
where $(\lambda_j^1, \bspsi_j^1)_{j\geq 1}$ are the eigenpairs corresponding to the covariance operator $\cC_1$. We remark that by solving the two step generalized eigenvalue problems, we don't need to assemble the Hessian matrix, which involves $O(N_h)$ PDE solves, but only need to perform $O(J)$ PDE solves. Considerable computational saving can thus be achieved for small $J$ when the eigenvalues of the Hessian of the misfit term and the posterior covariance decay fast.

\bibliographystyle{plain}
\bibliography{bibliography}

\end{document}